\newcounter{theorem}
\def\openthm#1#2{\refstepcounter{theorem}\bigskip
{\noindent\bf#1~\thetheorem\if#2!{. }\else{ (#2).}\fi}
\it}
\def\thmskip{}
\newenvironment{lemma}[1][!]{\openthm{Lemma}{#1}}{\thmskip}
\newenvironment{proposition}[1][!]{\openthm{Proposition}{#1}}{\thmskip}
\newcounter{definition}
\def\opendefn#1#2{\refstepcounter{definition}\bigskip
{\noindent\bf#1~\thetheorem\if#2!{. }\else{ (#2).}\fi}
\it}
\newenvironment{definition}[1][!]{\openthm{Definition}{#1}}{\thmskip}
\newcounter{remark}
\def\openrem#1#2{\refstepcounter{remark}\bigskip
{\noindent \it \bfseries#1~\theremark\if#2!{. }\else{ (#2). }\fi}}
\newenvironment{remark}[1][!]{\openrem{Remark}{#1}}{\qed}
\def\ilist{\renewcommand{\labelenumi}{(\roman{enumi})}}
\def\R{\mathbb{R}}
\def\N{\mathbb{N}}
\def\Z{\mathbb{Z}}
\def\CC{{\rm C}}
\def\dt{\,{\rm d}t}
\def\<{\langle}
\def\>{\rangle}
\def\eps{\varepsilon}
\def\E{\mathcal{E}}
\def\Ys{\mathcal{Y}}
\def\Us{\mathcal{U}}
\def\As{\mathcal{A}}
\def\Cs{\mathcal{C}}
\renewcommand{\cases}[1]{\left\{ \begin{array}{rl} #1 \end{array} \right.}
\newcommand{\smfrac}[2]{{\textstyle \frac{#1}{#2}}}
\newcommand{\alert}[1]{{#1}}
\begin{document}

\title[Accuracy of Quasicontinuum Approximations Near Instabilities]{
  Accuracy of Quasicontinuum Approximations \\ Near
  Instabilities}

\author{M. Dobson}
\author{M. Luskin}
\address{School of Mathematics, 206 Church St. SE,
  University of Minnesota, Minneapolis, MN 55455, USA}
\email{dobson@math.umn.edu, luskin@umn.edu}

\author{C. Ortner} \address{Mathematical Institute, 24--29 St. Giles',
  Oxford OX1 3LB, UK} \email{ortner@maths.ox.ac.uk}

\date{\today}

\keywords{atomistic-to-continuum coupling, defects, quasicontinuum method,
  sharp stability estimates}

\thanks{ This work was supported in part by DMS-0757355, DMS-0811039,
  the Department of Energy under Award Number DE-FG02-05ER25706, the
  Institute for Mathematics and Its Applications, the University of
  Minnesota Supercomputing Institute, the University of Minnesota
  Doctoral Dissertation Fellowship, and the EPSRC critical mass
  programme ``New Frontier in the Mathematics of Solids''
  (OxMoS). The project was initiated during a visit of ML to OxMoS. \\
  {\color{white} .} \quad  To appear in J. Mech. Phys. Solids.}

\subjclass[2000]{65Z05,70C20}

\begin{abstract}
  The formation and motion of lattice defects such as cracks,
  dislocations, or grain boundaries, occurs when the lattice
  configuration loses stability, that is, when an eigenvalue of the
  Hessian of the lattice energy functional becomes negative. When the
  atomistic energy is approximated by a hybrid energy that couples
  atomistic and continuum models, the accuracy of the approximation
  can only be guaranteed near deformations where both the atomistic
  energy as well as the hybrid energy are stable.  We propose,
  therefore, that it is essential for the evaluation of the predictive
  capability of atomistic-to-continuum coupling methods near
  instabilities that a theoretical analysis be performed, at least for
  some representative model problems, that determines whether the
  hybrid energies remain stable {\em up to the onset of instability of
  the atomistic energy}.

  We formulate a one-dimensional model problem with nearest and
  next-nearest neighbor interactions and use rigorous analysis,
  asymptotic methods, and numerical experiments to obtain such sharp
  stability estimates for the basic conservative quasicontinuum (QC)
  approximations.   Our results show that the consistent quasi-nonlocal QC
approximation correctly reproduces the stability of the atomistic system,
  whereas the inconsistent energy-based QC approximation
  incorrectly predicts instability at a significantly reduced applied load
  that we describe by an analytic criterion in
  terms of the derivatives of the atomistic potential.

\end{abstract}

\maketitle

\section{Introduction}

An important application of atomistic-to-continuum coupling methods is
the study of the quasistatic deformation of a crystal in order
to model instabilities such as dislocation
formation during nanoindentation, crack growth, or the deformation of
grain boundaries~\cite{Miller:2003a}. In each of these applications,
the quasistatic evolution provides an accurate approximation of the
crystal deformation until the evolution approaches an unstable
configuration. This occurs, for example, when a dislocation forms or
moves or when a crack tip advances.  The crystal will then typically
undergo a dynamic process until it reaches a new stable configuration.
In order to guarantee an accurate approximation of the entire
quasistatic crystal deformation, up to the formation of an
instability, it is crucial that the equilibrium in the
atomistic/continuum hybrid method is stable whenever the corresponding
atomistic equilibrium is. The purpose of this work is to investigate
whether the quasicontinuum (QC) method has this property. In technical
terms, this requires sharp estimates on the stability constant in the
QC approximation.

The QC method is an atomistic-to-continuum coupling method that models
the continuum region by using an energy density that exactly
reproduces the lattice-based energy density at uniform strain (the
Cauchy-Born rule)~\cite{Miller:2003a,Ortiz:1995a,Shenoy:1999a}.
Several variants of the QC approximation have been
proposed that differ in how the atomistic and continuum regions are
coupled~\cite{Dobson:2008a,E:2006,Miller:2003a,Shimokawa:2004}.  In
this paper, we present sharp stability analyses for the main examples
of conservative QC approximations as a means to evaluate their
relative predictive properties for defect formation and motion.  Our
sharp stability analyses compare the loads for which the atomistic
energy is stable, that is, those loads where the Hessian of the
atomistic energy is positive-definite, with the loads for which the
QC energies are stable.  It has previously been suggested and
then observed in computational experiments that
inconsistency at the atomistic-to-continuum interface can reduce the accuracy
for computing a critical applied load~\cite{Shenoy:1999a,Miller:2003a,Miller:2008,E:2006}.
In this paper, we give an analytical method to estimate
the error in the critical applied load by deriving stability
criteria in terms of  the derivatives of the atomistic potential.

Although we present our techniques
in a precise mathematical format, we believe that these techniques
can be utilized in a more informal way by computational scientists to
quantitatively evaluate the predictive capability of other atomistic-to-continuum or
multiphysics models as they arise.
For example, our quantitative approach
has the potential to estimate the reduced critical applied load
in QC approximations such as
the quasi-nonlocal QC
approximation (QNL), that are consistent
for next-nearest interactions but not for longer range interactions.
Since the longer range interactions are generally weak, such
an estimate may give an analytical basis to judging that the
reduced critical applied load for QNL with finite range interactions
is within an acceptable error tolerance.

The accuracy of various QC approximations and other
atomistic-to-continuum coupling methods is currently being
investigated by both computational experiments and numerical analysis
~\cite{BadiaParksBochevGunzburgerLehoucq:2007,Legoll:2005,Dobson:2008c,Dobson:2008b,dobs-qcf2,E:2005a,Gunzburger:2008a,Gunzburger:2008b,LinP:2003a,LinP:2006a,luskin-cluster-2008,mingyang,Ortner:2008a,PrudhommeBaumanOden:2005}.
The main issue that has been studied to date in the mathematical
analyses is the rate of convergence with respect to the smoothness of
the continuum solution (however,
see~\cite{Legoll:2005,dobs-qcf2,Ortner:2008a} for
analyses of the error of the QC solutions with respect to the
atomistic solution, possibly containing defects).  Some error
estimates have been obtained that give theoretical justification for
the accuracy of a QC approximation for all loads up to the
critical atomistic load where the atomistic model loses
stability~\cite{Dobson:2008b,Ortner:2008a}, but
other error estimates that have been presented do not hold near the
atomistic limit loads. It is important to understand whether the
break-down of these error estimates is an artifact of the analysis, or
whether the particular QC approximation actually does incorrectly
predict an instability before the applied load has reached the correct
limit load of the atomistic model.

Two key ingredients in any approximation error analysis are the
consistency and stability of the approximation scheme. For energy
minimization problems, consistency means that the truncation error for
the equilibrium equations is small in a suitably chosen norm, and
stability is usually understood as the positivity of the Hessian of
the functional. For the highly non-convex problems we consider here,
stability must necessarily be a local property: The configuration
space can be divided into stable and unstable regions, and the
question we ask is whether the stability regions of different QC
approximations approximate the stability region of the full atomistic model
in a way that can be controlled in the setup of the method (for
example, by a judicious choice of the atomistic region).

In this work, we initiate such a systematic study of the stability of
QC approximations. In the present paper, we investigate
conservative QC approximations, that is, QC approximations which are formulated in
terms of the minimization of an energy functional. In a companion
paper \cite{doblusort:qcf.stab}, we study the stability of a
force-based approach to atomistic-to-continuum coupling that is
nonconservative.

In computational experiments, one often studies the evolution of a
system under incremental loading. There, the critical load at which
the system ``jumps'' from one energy well to another is often the goal
of the computation. Thus, we will also study the effect of the
``stability error'' on the error in the critical load.

We will formulate a simple model problem, a one dimensional periodic
atomistic chain with pairwise next-nearest neighbour interactions of
Lennard-Jones type, for which we can analyze the issues layed out in
the previous paragraphs. It is well known that the uniform
configuration is stable only up to a critical value of the tensile
strain (fracture). We use analytic, asymptotic, and numerical
approaches to obtain sharp results for the stability of different
QC approximations when applied to this simple model.

In Section~\ref{sec:model}, we describe the model and the various
QC approximations that we will analyze.  In
Section~\ref{sec:analysis}, we study the stability of the atomistic
model as well as two consistent QC approximations: the local QC
approximation (QCL) and the quasi-nonlocal QC approximation (QNL).  We prove that
the critical applied strains for both of these approximations are equal to
the critical applied strain for the atomistic model, up to
second-order in the atomistic spacing.

A similar analysis for the inconsistent QCE approximation is more difficult
because the uniform configuration is not an equilibrium. Thus, in
Section~\ref{sec:ana:qce}, we construct a first-order correction of
the uniform configuration to approximate an equilibrium configuration,
and we study the positive-definiteness of the Hessian for the
linearization about this configuration.  We explicitly construct a
test function with strain concentrated in the atomistic-continuum
interface that is unstable for applied strains bounded well away from
the atomistic critical applied strain.

In Section~\ref{sec:discus}, we analyze the accuracy in predicting the
critical strain for onset of instability. For the QCL and QNL approximations,
this involves comparing the effect of the difference between their
modified stability criteria and that of the atomistic model.  For QCE,
since the solution to the nonlinear equilibrium equations are
non-trivial, we provide computational results in addition to an
analysis of the critical QCE strain predicted by the approximations
derived in Section \ref{sec:ana:qce}.

\section{The atomistic and quasicontinuum models}
\label{sec:model}

\subsection{The atomistic model problem}
\label{sec:intro:model_problem}
\begin{figure}[t]
  \hspace{-10mm} \includegraphics[height=5cm]{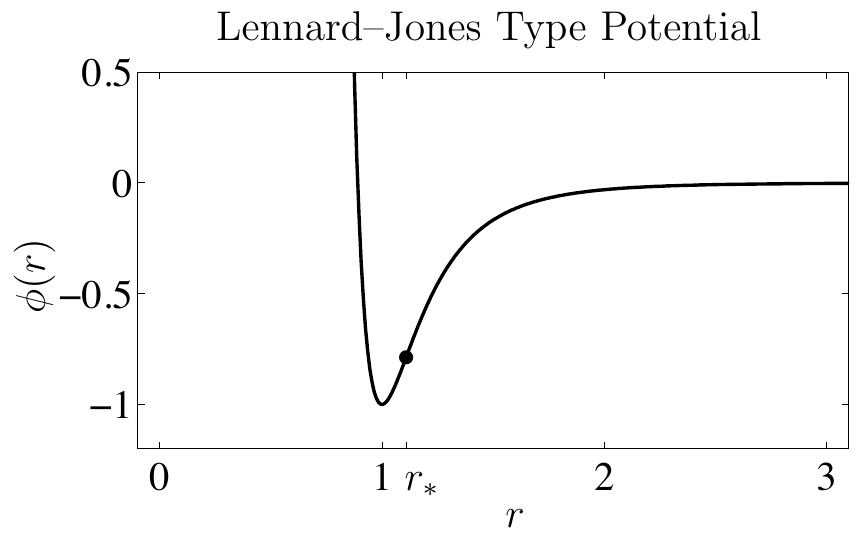} \\[-4mm]
  \caption{\label{fig:lj}Lennard-Jones type interaction potential. The
    bond length $r_*$ is the turning point between the convex and
    concave regions of $\phi$.}
\end{figure}
Suppose that the infinite lattice $\eps \Z$ is deformed uniformly into
the lattice $y_F := F\eps  \Z,$ where $F > 0$ is the macroscopic
deformation gradient and where $\eps > 0$ scales
the reference atomic spacing, that is,
\begin{displaymath}
  (y_F)_\ell:=F\ell\eps \quad\text{for }-\infty<\ell<\infty.
\end{displaymath}
We admit $2N$-periodic perturbations $u = (u_\ell)_{\ell \in \Z}$ from
the uniformly deformed lattice $y_F.$ More precisely, for fixed $N \in \N$, we
admit deformations $y$ from the space
\begin{equation*}
  \Ys_F := \big\{ y \in \R^\Z : y = y_F + u,\,
  u \in \Us \big\},
\end{equation*}
where $\Us$ is the space of $2N$-periodic displacements with zero mean,
\begin{equation*}
  \Us := \big\{ u \in \R^\Z : u_{\ell+2N} = u_\ell \text{ for }
  \ell \in \Z, \text{ and } {\textstyle \sum_{\ell = -N+1}^N}
  u_\ell = 0 \big\}.
\end{equation*}
We set $\eps = 1/N$ throughout so that the reference length of the periodic
domain is fixed. Even though the energies and forces we
will introduce are well-defined for all $2N$-periodic displacements,
we require that they have zero mean in order to obtain locally unique
solutions to the equilibrium equations.  These zero mean constraints
are an artifact of our periodic boundary conditions and are similarly
used in the analysis of continuum problems with periodic boundary conditions.

We assume that the {\em stored energy per period} of a deformation $y
\in \Ys_F$ is given by a next-nearest neighbour pair interaction
model,
\begin{equation}\label{bond}
  \E_{\rm a}(y) := \eps \sum_{\ell = -N+1}^N \big(\phi(y_\ell')
  + \phi(y_\ell' + y_{\ell+1}')\big),
\end{equation}
where $v_\ell'$ is the backward difference
\begin{displaymath}
  v_\ell' := \eps^{-1}(v_\ell - v_{\ell-1})\qquad\text{for }v \in \R^\Z,\
  \ell \in \Z,
\end{displaymath}
and where $\phi$ is a Lennard-Jones type
interaction potential satisfying (see also Figure \ref{fig:lj})
\begin{enumerate}
  \ilist
\item $\phi \in \CC^4((0, +\infty); \R)$,
\item there exists $r_* > 0$ such that $\phi$ is convex in $(0, r_*)$
  and concave in $(r_*, +\infty)$, and
\item $\phi^{(k)}(r) \rightarrow 0$ rapidly as $r \nearrow \infty$,
  for $k = 0, \dots, 4$.
\end{enumerate}
We have used the scaled interaction potential, \alert{$\eps\phi(r/\eps),$} in the
definition of the stored energy, $\E_{\rm a}(y),$ to obtain a
continuum limit as $\eps\to 0.$ Assumptions (i) and (ii) are used
throughout our analysis, while assumption (iii) serves primarily to
motivate that next-nearest neighbour interaction terms are typically
dominated by nearest-neighbour terms. Note, however, that even with
assumption (iii), the relative size of next-nearest and nearest
neighbour interactions is comparable when strains approach $r_*$.

We denote the first variation of the energy functional, $\E_{\rm
  a}'(y)[u],$ at a deformation $y\in\Ys_F$ by
\begin{equation*}
  \E_{\rm a}'(y)[u] :=\sum_{\ell = -N+1}^N \frac{\partial \E_{\rm a}(y)}{\partial y_\ell}u_{\ell}
  =\eps \sum_{\ell = -N+1}^N \Big\{\phi'(y_\ell') u_\ell'
  +\phi'(y_\ell'+y_{\ell+1}')(u_\ell' + u_{\ell+1}')\Big\},
\end{equation*}
for $u \in \Us$. In the absence of external forces, the uniformly
deformed lattice $y = y_F$ is an equilibrium of the atomistic energy
under perturbations from $\Us$, that is,
\begin{equation}
  \label{eq:DEa_yF}
  \E_{\rm a}'(y_F)[u] = 0 \qquad \text{ for all } u \in \Us.
\end{equation}

We identify the stability of $y_F$ with {\em linear stability under
  perturbations from the space $\Us$}.  To make this precise, we
denote the second variation of the energy functional, $\E_{\rm a}''(y)[u,v],$ evaluated at a deformation
$y\in\Ys_F$, by
\begin{equation}
  \label{eq:defn_Ea_hess}
  \begin{split}
  \E_{\rm a}''(y)[u,v] &:=\sum_{\ell,\, m = -N+1}^N  \frac{\partial^2 \E_{\rm a}(y)}{\partial y_\ell \partial y_m
  }u_{\ell}v_m \\
  &= \eps \sum_{\ell = -N+1}^N \Big\{
  \phi''(y_\ell') u_\ell'  v_\ell'
  + \phi''(y_\ell' + y_{\ell+1}') [u_\ell' + u_{\ell+1}'] [v_\ell' + v_{\ell+1}']  \Big\},
  \end{split}
\end{equation}
for all $u,v\in\Us$. The matrix $\Big(\frac{\partial^2 \E_{\rm
    a}(y)}{\partial y_\ell \partial y_m }\Big)_{\ell, m = -N+1}^N$ is
the Hessian for the energy functional.  We say that the equilibrium
$y_F$ is {\em stable} for the atomistic model if this Hessian,
evaluated at $y = y_F$, is positive definite on the subspace $\Us$ of
zero mean displacements, or equivalently, if
\begin{equation}\label{hess}
  \E_{\rm a}''(y_F)[u,u] > 0 \qquad \text{ for all } u \in \Us \setminus\{0\}.
\end{equation}
\alert{In Section~\ref{sec:summary}, Definition~\ref{def:stab}, we
  extend this definition of stability to the various QC approximation
  and their equilibria.}

Note that if $y = y_F$, then $y_\ell' = F$ and $y_\ell' + y_{\ell+1}' =
2F$ for all $\ell$. Therefore, upon defining the quantities
\begin{displaymath}
  \phi_F'' := \phi''(F), \quad
  \phi_{2F}'' := \phi''(2F), \quad \text{and} \quad
  A_F = \phi_F'' + 4 \phi_{2F}'',
\end{displaymath}
we can rewrite \eqref{eq:defn_Ea_hess} as follows
\begin{equation}
  \label{eq:Ea_hess_F}
  \E_{\rm a}''(y_F)[u,u] = \eps \sum_{\ell = -N+1}^N \big\{ \phi_F'' |u_\ell'|^2
  + \phi_{2F}'' |u_\ell' + u_{\ell+1}'|^2 \big\} \quad \text{for } u\in\Us.
\end{equation}
(We will use $A_F$ later.)  The quantities $\phi_F''$ and
$\phi_{2F}''$ will play a prominent role in the analysis of the
stability of the atomistic model and its QC approximations and
describe the strength of the nearest neighbor and next-nearest neighbor
interactions, respectively.  We
similarly define the quantities $\phi^{(k)}_G$ for all $k \in \N$ and
for all $G > 0$.  For most realistic interaction potentials the
second-nearest neighbour coefficient is non-positive, $\phi_{2F}''
\leq 0,$ except in the case of extreme compression (see Figure
\ref{fig:lj}).  Therefore, in order to avoid having to distinguish
several cases, we will assume throughout our analysis that $F \geq
r_*/2$. In this case, property (ii) of the interaction potential shows
that $\phi_{2F}'' \leq 0$.

We also note that, for $u \in \Us$, both $u'$ and $u''$ are understood
as $2N$-periodic chains, that is, $u', u'' \in \Us,$ where
the centered second
difference $u'' \in \Us$ is defined by
\begin{displaymath}
  u_\ell'' := \eps^{-2}(u_{\ell+1} - 2 u_\ell + u_{\ell-1})
  \qquad\text{for }u \in \R^\Z,\
  \ell \in \Z.
\end{displaymath}
For $u, v \in \Us$, we also define the weighted $\ell^p$-norms
\begin{displaymath}
  \| v \|_{\ell^p_\eps} := \cases{
    \displaystyle \Bigg( \sum_{\ell=-N+1}^N \eps |v_\ell|^p \Bigg)^{1/p}, &
    1 \leq p < \infty, \\[10pt]
    \displaystyle \max_{\ell = -N+1, \dots, N} |v_\ell|, & p = \infty,
  }
\end{displaymath}
as well as the weighted $\ell^2$-inner product
\begin{displaymath}
  \< u, v \> = \eps \sum_{\ell = -N+1}^N u_\ell v_\ell.
\end{displaymath}

\subsection{The local QC approximation (QCL)}
\label{sec:model:qcl}
Before we introduce different flavors of QC
approximations, we note that we can rewrite the atomistic energy as a
sum over the contributions from each atom,
\begin{align*}
  \E_{\rm a}(y) =~& \eps \sum_{\ell = -N+1}^N E^a_\ell(y)
  \qquad \text{where} \\
  E^a_\ell(y) :=~& \smfrac12 \big[ \phi(y_\ell')
  + \phi(y_{\ell+1}')
  + \phi(y_{\ell-1}' + y_\ell')
  + \phi(y_{\ell+1}'+y_{\ell+2}') \big].
\end{align*}

If $y$ is ``smooth,'' i.e., $y_\ell'$ varies slowly, then $E^a_\ell(y)
\approx E^c_\ell(y)$ where
\begin{displaymath}
  E^c_\ell(y) := \smfrac12 \big[ \phi(y_\ell') + \phi(y_{\ell+1}')
  + \phi(2y_\ell') + \phi(2y_{\ell+1}') \big]
  = \smfrac12 \big[ \phi_{cb}(y_\ell') + \phi_{cb}(y_{\ell+1}') \big],
\end{displaymath}
and where $\phi_{cb}(r) := \phi(r) + \phi(2r)$ is the so-called {\em
  Cauchy-Born stored energy density}. In this case, we may expect
that the atomistic model is accurately represented by the local QC (or
continuum) model
\begin{equation}\label{qcldef}
  \E_{\rm qcl}(y) := \eps \sum_{\ell = -N+1}^N E^c_\ell(y)
  = \eps \sum_{\ell = -N+1}^N \phi_{cb}(y_\ell').
\end{equation}
The main feature of this {\em continuum} model is that the
next-nearest neighbour interactions have been replaced by nearest
neighbour interactions, thus yielding a model with more {\em
  locality}. Such a model can subsequently be coarse-grained (i.e.,
degrees of freedom are removed) which yields efficient numerical
methods.

\subsection{The energy-based QC approximation (QCE)}
\label{sec:model:qce}
If $y_\ell'$ is ``smooth'' in the majority of the computational
domain, but not in a small neighbourhood, say, $\{ -K, \dots, K \}$,
where $K > 1$, then we can obtain sufficient accuracy and efficiency by coupling the atomistic model to the local QC
model by simply choosing energy contributions $E^a_\ell$ in the {\em
  atomistic region} $\As = \{-K, \dots, K\}$ and $E^c_\ell$ in the
{\em continuum region} $\Cs = \{-N+1, \dots, N\} \setminus \As$. This
approximation of the atomistic energy is often called the {\em energy based QC approximation}
\cite{Ortiz:1995a} and yields the energy functional
\begin{align*}
  \E_{\rm qce}(y) :=& \eps\sum_{\ell \in \Cs}  E^c_\ell(y)
  + \eps \sum_{\ell \in \As} E^a_\ell(y).
\end{align*}

\alert{ It is now well-understood
  \cite{Dobson:2008a,Dobson:2008c,Dobson:2008b,E:2006,Shenoy:1999a}
  that the QCE approximation exhibits an inconsistency (``ghost
  force'') near the interface, which is displayed in the fact that
  $\E_{\rm qce}'(y_F)\ne 0 .$ The first remedy of this lack of
  consistency was the {\em ghost force correction
    scheme}~\cite{Shenoy:1999a} which eventually led to the derivation
  of the force-based QC approximation \cite{Dobson:2008a} and which we
  analyze in \cite{doblusort:qcf.stab} and \cite{dobs-qcf2}. }

\subsection{Quasi-nonlocal coupling (QNL)}
\label{sec:model:qnl}
An alternative approach was suggested in \cite{Shimokawa:2004}, which
requires a modification of the energy at the interface. This idea is
best understood in terms of interactions rather than energy
contributions of individual atoms (see also \cite{E:2006} where this
has been extended to longer range interactions). The nearest neighbour
interactions are left unchanged. A next-nearest neighbour interaction
$\phi(\eps^{-1}(y_{\ell+1}-y_{\ell-1}))$ is left unchanged if at least
one of the atoms $\ell+1, \ell-1$ belong to the atomistic region and
is replaced by a Cauchy--Born approximation,
\begin{displaymath}
  \phi(\eps^{-1}(y_{\ell+1}-y_{\ell-1})) \approx
  \smfrac12 \big[ \phi(2y_\ell') + \phi(2y_{\ell+1}')]
\end{displaymath}
if {\em both} atoms belong to the continuum region. This idea leads to
the energy functional
\begin{equation*}
  \begin{split}
    \E_{\rm qnl}(y) :=\eps \sum_{\ell = -N+1}^N  \phi(y_\ell')
    + \eps \sum_{\ell \in \As_{\rm qnl}} \phi(y_\ell' + y_{\ell+1}')
    + \eps \sum_{\ell \in \Cs_{\rm qnl}} \smfrac12 \big[ \phi(2y_\ell')
    + \phi(2y_{\ell+1}')\big]
  \end{split}
\end{equation*}
where $\As_{\rm qnl} = \{-K-1, \dots, K+1\}$ and $\Cs_{\rm qnl} =
\{-N+1,\dots, N\} \setminus \As_{\rm qnl}$ are modified atomistic and
continuum regions.  The QNL approximation is consistent, that is, $y = y_F$
is an equilibrium of the QNL energy functional. The label QNL comes
from the original intuition of considering interfacial atoms as {\em
  quasi-nonlocal}, i.e., they interact by different rules with atoms
in the atomistic and continuum regions.

\alert{
\section{Stability of Quasicontinuum Approximations: Summary}
\label{sec:summary}
In this section, we briefly summarize the our main results.

We begin by giving a careful definition of a notion of stability. Our
condition is slightly stronger than local minimality, which is the
natural concept of stability in statics. However, an analysis of local
minimality alone is usually not tractable. Moreover, for the
deformations that we consider, our definition is in fact sufficiently
general.

\begin{definition}[Stable Equilibrium]\label{def:stab}
  Let $\E : \Ys_F \to \R \cup \{+\infty\}$. We say that $y \in \Ys_F$
  is a stable equilibrium of $\E$ if $\E$ is twice differentiable at
  $y$ and the following conditions hold:
  \begin{itemize}
  \item[{\it (i)}] \quad $\E'(y)[u] = 0$ for all $u \in \Us$,
  \item[{\it (ii)}] \quad $\E''(y)[u,u] > 0$ for all $u \in \Us \setminus \{0\}$.
  \end{itemize}
  If only (i) holds, then we call $y$ a critical point of $\E$.
\end{definition}

\medskip \noindent We focus on the deformation $y_F =
(F\ell\eps)_{\ell \in \Z}$ and ask for which macroscopic strains $F$
this deformation is a stable equilibrium. We know from
\eqref{eq:DEa_yF} that $y_F$ is a critical point of the atomistic
energy $\E_{\rm a}$, and it is easy to see that $y_F$ is also a
critical point of the QCL energy $\E_{\rm qcl}$ and of the QNL energy
$\E_{\rm qnl}$. Our analysis in Section \ref{sec:analysis} gives the
following conditions under which $y_F$ is stable:
\begin{enumerate}
\item[1.] $y_F$ is a stable equilibrium of $\E_{\rm a}$ if and only if
  $A_F - \eps^2 \pi^2 \phi_{2F}'' + O(\eps^4) > 0$;

  \vspace{1mm}
\item[2.] $y_F$ is a stable equilibrium of $\E_{\rm qcl}$ if and only
  if $A_F > 0$;

  \vspace{1mm}
\item[3.] $y_F$ is a stable equilibrium of $\E_{\rm qnl}$ if and only
  if $A_F > 0$
\end{enumerate}
where we recall that $A_F = \phi_F'' + 4 \phi_{2F}''$ is the continuum
elastic modulus for the Cauchy--Born stored energy function
$\phi_{cb}(r) = \phi(r) + \phi(2r)$. Points 1., 2., and 3. are
established, respectively, in Propositions \ref{th:ana:stab_a},
\ref{th:ana:qcl_stab}, and \ref{th:ana:qnl_stab}.

If we envision a quasistatic process in which $F$ is slowly increased,
then we may wish to find the critical strain $F^*$ at which $y_F$ is
no longer a stable equilibrium (fracture instability). If we denote
the critical strains in the atomistic, QCL, and QNL models,
respectively, by $F_{\rm a}^*$, $F_{\rm qcl}^*$ and $F_{\rm qnl}^*$,
then 1.--3. imply that (cf. Section \ref{sec:discus})
\begin{displaymath}
  |F_{\rm a}^* - F_{\rm qcl}^*| = O(\eps^2) \quad \text{and} \quad
  |F_{\rm a}^* - F_{\rm qnl}^*| = O(\eps^2).
\end{displaymath}

For the QCE approximation defined in Section \ref{sec:model:qce}, the
situation is more complicated. The occurrence of a ``ghost force'' in
the QCE model implies that $y_F$ is {\em not} a critical point of
$\E_{\rm qce}$, and consequently, we will need to analyze the
stability of the second variation $\E_{\rm qce}''(y_{{\rm qce}, F})$
where $y_{{\rm qce}, F} \neq y_F$ is an appropriately chosen
equilibrium of $\E_{\rm qce}$. Since $y_{{\rm qce}, F}$ solves a
nonlinear equation, we will replace it by an approximate equilibrium
in our analysis in Section \ref{sec:ana:qce} where we obtain the
following (simplified) result:
\begin{enumerate}
\item[4.] For $y_{{\rm qce}, F}$ to be a stable equilibrium of
  $\E_{\rm qce}$ it is necessary that
  \begin{displaymath}
    1 + \frac{3 \phi_{2F}''}{2 \phi_F''}
    + \frac{\phi_F''' \phi_{2F}'}{2 |\phi_F''|^2} + O(\delta^2) > 0,
  \end{displaymath}
  where $\delta = \max\{ |\phi^{(j)}(2F) / \phi''(F)|:  j = 1,2,3
  \}$ is assumed to be small.
\end{enumerate}
We remark that 4. gives only a necessary but not a sufficient
condition for stability of the QCE equilibrium $y_{{\rm qce}, F}$,
which, moreover, depend on assumptions on the parameter $\delta$. We
refer to Remark \ref{rem:delta_discussion} for a careful discussion
of the role of $\delta$.

If we let $\tilde{F}_{\rm qce}^*$ denote the critical strain at which
4. fails (ignoring the $O(\delta^2)$ term), then we obtain
\begin{displaymath}
  | F_{\rm a}^* - \tilde{F}_{\rm qce}^* | = O(1),
\end{displaymath}
which suggests that the QCE method is unable to predict the onset of
fracture instability accurately. In Section \ref{sec:discus}, we
confirm this asymptotic prediction with numerical experiments.

We have shown in \cite{qcf.iterative} that the stability properties of
the ghost force correction scheme (GFC) can be understood for uniaxial
tensile loading by considering the stability of the QC energy
\begin{equation}\label{gfc}
  \E_{{\rm gfc},\, y_F}(y):=\E_{\rm qce}(y)-\E_{\rm qce}'(y_F)(y-y_F)
  \quad\text{for all }y \in \Ys_F.
\end{equation}
We note that $\E_{{\rm gfc},\, y_F}'(y_F)=0,$ so $y_F$ is an
equilibrium of the $\E_{{\rm gfc},\, y_F}$ energy under perturbations
from $\Us.$ We can therefore analyze the stability of $\E_{{\rm
    gfc},\, y_F}(y)$ at $y_F$ by studying the Hessian $\E_{\rm
  qce}''(y_F)=\E_{{\rm gfc},\, y_F}''(y_F).$ We show in
Remark~\ref{rmbounds} in Section~\ref{sec:model:qce} that
\begin{enumerate}
\item[5.] $y_F$ is a stable equilibrium of $\E_{{\rm gfc},\, y_F}$ if and only if
  $A_F + \lambda_K
  \phi_{2F}'' > 0$ where $\smfrac 12 \le \lambda_K \le 1.$
\end{enumerate}
We give analytic and computation results in Sections~\ref{sec:ana:qce} and \ref{sec:discus} showing that
the ghost force correction scheme
can be expected to improve the accuracy of the computation of the critical strain
by the QCE method,
that is,
\[
\tilde{F}_{\rm qce}^*<F_{\rm qce}^{y_F}<F_{\rm a}^*,
\]
where $F_{\rm qce}^{y_F}$ is the critical strain
      at which $\E_{\rm qce}''(y_F)=\E_{{\rm gfc},\, y_F}''(y_F)$ is no longer positive definite, but the
      error in computing the critical strain by the GFC scheme is still
\begin{displaymath}
  | F_{\rm a}^* - F_{\rm qce}^{y_F} | = O(1).
\end{displaymath}
}

\section{Sharp Stability Analysis of Consistent QC Approximations}
\label{sec:analysis}
In this section, we analyze the stability of the atomistic model and
two consistent QC approximations: the local QC approximation and the
quasi-nonlocal QC approximation. In each case, we will give precise
conditions on $F$ under which $y_F$ is stable in the respective
approximation. The inconsistent energy-based QC approximation (QCE) is
analyzed in Section \ref{sec:ana:qce}. The corresponding result for
QCE is less exact than for QCL and QNL, but shows that there is a much
more significant loss of stability.

\subsection{Atomistic model}
\label{sec:ana:a}
Recalling the representation of $\E_{a}''(y_F)$ from
\eqref{eq:Ea_hess_F} and noting that
\begin{equation}
  \label{eq:rewrite_nnn}
   |u_\ell'+u_{\ell+1}'|^2 = 2|u_\ell'|^2 + 2|u_{\ell+1}'|^2 -  |u_{\ell+1}' - u_\ell'|^2,
\end{equation}
we obtain
\begin{align}
  \notag
  \E_{a}''(y_F)[u,u]
  =~& \eps\sum_{\ell = -N+1}^N \phi_F'' |u_\ell'|^2
  + \eps \sum_{\ell = -N+1}^N \phi_{2F}'' \big(2|u_\ell'|^2 + 2|u_{\ell+1}'|^2 -  |u_{\ell+1}' - u_\ell'|^2\big) \\
  \notag
  =~& \eps\sum_{\ell = -N+1}^N (\phi_{F}'' + 4 \phi_{2F}'') |u_\ell'|^2
  + \eps \sum_{\ell = -N+1}^N (-\eps^2\phi_{2F}'')|u_\ell''|^2 \\
  \label{eq:Eq_second_diff_form}
  =~&  A_F \| u' \|_{\ell^2_\eps}^2 + (-\eps^2 \phi_{2F}'') \|u''\|_{\ell^2_\eps}^2.
\end{align}

To quantify the influence of the strain gradient term, we define
\begin{displaymath}
  \mu_{\eps} := \inf_{\psi \in \Us \setminus\{0\}}
  \frac{ \| \psi'' \|_{2} }{ \| \psi'\|_2 }.
\end{displaymath}
Since $u$ is periodic, it follows that $u'$ has zero mean. In this
case, the eigenvalue $\mu_{\eps}$ is known to be attained by the
eigenfunction $\psi'_\ell=\sin(\eps \ell\pi)$ and is given by
\cite[Exercise 13.9]{SuliMayers}
\begin{equation}
  \label{eq:defn_muaeps}
  \mu_{\eps} = \frac{2 \sin(\pi\eps/2)}{\eps}.
\end{equation}
Since $\sin(t) = t + O(t^3)$ as $t \searrow 0$, it follows that
$\mu_\eps = \pi + O(\eps^2)$ as $\eps \searrow 0$. Thus, we obtain the
following stability result for the atomistic model.

\begin{proposition}\label{th:ana:stab_a}
  Suppose $\phi_{2F}'' \leq 0$.  Then $y_F$ is stable in the atomistic
  model if and only if $A_F - \eps^2 \mu_{\eps}^{2} \phi_{2F}'' >
  0$, where $\mu_{\eps}$ is the eigenvalue defined in
  \eqref{eq:defn_muaeps}.
\end{proposition}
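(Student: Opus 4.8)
The plan is to combine the closed-form expression \eqref{eq:Eq_second_diff_form} for the atomistic Hessian with the sharp Poincar\'e-type inequality encoded by the eigenvalue $\mu_\eps$, and then to check separately that the infimum is actually attained so that the stability condition is an equivalence (``if and only if'') rather than merely a one-sided bound. Since the argument is short, I would present it as a direct computation rather than by contradiction.

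\emph{First}, I would recall from \eqref{eq:Eq_second_diff_form} that for all $u \in \Us$,
\begin{displaymath}
  \E_{a}''(y_F)[u,u] = A_F \| u' \|_{\ell^2_\eps}^2 + (-\eps^2 \phi_{2F}'') \| u'' \|_{\ell^2_\eps}^2,
\end{displaymath}
where $-\eps^2\phi_{2F}'' \ge 0$ by the hypothesis $\phi_{2F}'' \le 0$. \emph{Second}, by the definition of $\mu_\eps$ we have $\| u'' \|_{\ell^2_\eps}^2 \ge \mu_\eps^2 \| u' \|_{\ell^2_\eps}^2$ for every $u \in \Us$ (note $u' \in \Us$, so $u'$ has zero mean and $\mu_\eps$ applies), hence
\begin{displaymath}
  \E_{a}''(y_F)[u,u] \ge \big( A_F - \eps^2 \mu_\eps^2 \phi_{2F}'' \big) \| u' \|_{\ell^2_\eps}^2.
\end{displaymath}
Since $u \mapsto u'$ is injective on $\Us$ (a zero-mean periodic sequence is determined by its backward differences), $\| u' \|_{\ell^2_\eps} = 0$ forces $u = 0$; therefore $A_F - \eps^2\mu_\eps^2\phi_{2F}'' > 0$ implies $\E_{a}''(y_F)[u,u] > 0$ for all $u \in \Us\setminus\{0\}$, which is one direction.

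\emph{Third}, for the converse I would use the explicit minimizer: the function $\psi \in \Us$ with $\psi'_\ell = \sin(\eps\ell\pi)$ realizes the infimum, i.e.\ $\|\psi''\|_{\ell^2_\eps}^2 = \mu_\eps^2 \|\psi'\|_{\ell^2_\eps}^2$ with $\psi' \ne 0$, so that
\begin{displaymath}
  \E_{a}''(y_F)[\psi,\psi] = \big( A_F - \eps^2 \mu_\eps^2 \phi_{2F}'' \big) \| \psi' \|_{\ell^2_\eps}^2.
\end{displaymath}
Thus if $A_F - \eps^2\mu_\eps^2\phi_{2F}'' \le 0$, this $\psi$ is a nonzero element of $\Us$ with $\E_{a}''(y_F)[\psi,\psi] \le 0$, so $y_F$ is not stable. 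Combining the two directions gives the claimed equivalence.

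\emph{The main obstacle} is not any hard analysis but rather bookkeeping on the discrete function spaces: one must be careful that $u'$ (and $u''$) indeed lie in $\Us$ and in particular have zero mean, so that the eigenvalue estimate $\mu_\eps$ legitimately applies to $u'$; that the minimizing sequence $\psi'_\ell = \sin(\eps\ell\pi)$ is genuinely $2N$-periodic and zero-mean (which uses $\eps = 1/N$) so it corresponds to an admissible $\psi \in \Us$; and that the map $u \mapsto u'$ is injective on $\Us$, so that positivity of the right-hand side transfers to positivity of the Hessian. All of these are routine once stated, and the value of $\mu_\eps$ in \eqref{eq:defn_muaeps} is quoted from the cited reference, so no further computation is needed.
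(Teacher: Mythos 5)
Your proposal is correct and follows essentially the same route as the paper: both start from the representation \eqref{eq:Eq_second_diff_form}, use $\phi_{2F}''\le 0$ so that the sign of the infimum of $\E_{\rm a}''(y_F)[u,u]$ over $\|u'\|_{\ell^2_\eps}=1$ is governed by $A_F-\eps^2\mu_\eps^2\phi_{2F}''$, and invoke the attainment of $\mu_\eps$ by $\psi_\ell'=\sin(\eps\ell\pi)$. The paper compresses this into a single computation of the infimum, while you spell out the two directions and the bookkeeping (zero mean of $u'$, injectivity of $u\mapsto u'$ on $\Us$) explicitly; the content is the same.
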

\begin{proof}
  By the definition of $\mu_{\eps}$, and using
  \eqref{eq:Eq_second_diff_form}, we have
  \begin{displaymath}
    \inf_{\substack{u \in \Us \\ \|u'\|_{\ell^2_\eps} = 1}} \E_{\rm a}''(y_F)[u,u]
    = A_F - \eps^2 \phi_{2F}''
    \inf_{\substack{u \in \Us \\ \|u'\|_{\ell^2_\eps} = 1}} \|u''\|_{\ell^2_\eps}^2
    = A_F - \eps^2 \mu_\eps^2 \phi_{2F}''. \qedhere
  \end{displaymath}
\end{proof}

\subsection{The Local QC approximation}
The equilibrium system, in variational form, for the QCL approximation is
\begin{displaymath}
  \E_{\rm qcl}'(y)[u] = \eps \sum_{\ell = -N+1}^N \big( \phi'(y_\ell') + 2\phi'(2 y_\ell') \big) u_\ell' = 0 \qquad \text{ for all } u \in \Us.
\end{displaymath}
Since $u'$ has zero mean, it follows that $y = y_F$ is a critical
point of $\E_{\rm qcl}$ for all $F$.  The second variation of the
local QC energy, evaluated at $y = y_F$, is given by
\begin{align*}
  \E_{\rm qcl}''(y_F)[u,u] =~& \eps \sum_{\ell = -N+1}^N A_F |u_\ell'|^2
  \qquad \text{ for } u \in \Us.
\end{align*}
Thus, recalling our
definition of stability from Section \ref{sec:intro:model_problem}, we
obtain the following result.

\begin{proposition}\label{th:ana:qcl_stab}
  The deformation $y_F$ is a stable equilibrium of the local QC approximation
  if and only if $A_F > 0$.
\end{proposition}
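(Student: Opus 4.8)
The plan is to read off both defining properties of a stable equilibrium from Definition~\ref{def:stab}, using the computations already in hand. First I would verify condition~(i): evaluating the first variation at $y_F$ gives $\E_{\rm qcl}'(y_F)[u] = \eps\big(\phi'(F)+2\phi'(2F)\big)\sum_{\ell=-N+1}^N u_\ell'$, and since the coefficient is independent of $\ell$ and $\sum_{\ell=-N+1}^N u_\ell'=0$ for every $u\in\Us$ by $2N$-periodicity (the sum telescopes), $y_F$ is a critical point of $\E_{\rm qcl}$ for every $F>0$.

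Next I would address condition~(ii) using the identity $\E_{\rm qcl}''(y_F)[u,u] = A_F\|u'\|_{\ell^2_\eps}^2$ derived above. The one fact that needs a sentence of justification is that $u\mapsto\|u'\|_{\ell^2_\eps}$ is a genuine norm on $\Us$: if $\|u'\|_{\ell^2_\eps}=0$ then $u_\ell=u_{\ell-1}$ for all $\ell$, so $u$ is constant, and the zero-mean constraint built into $\Us$ then forces $u=0$. Hence $\|u'\|_{\ell^2_\eps}^2>0$ for every $u\in\Us\setminus\{0\}$, and the sign of $\E_{\rm qcl}''(y_F)[u,u]$ is governed entirely by the sign of the constant $A_F$.

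Both implications then follow immediately. If $A_F>0$, then $\E_{\rm qcl}''(y_F)[u,u]=A_F\|u'\|_{\ell^2_\eps}^2>0$ for all $u\in\Us\setminus\{0\}$, so~(ii) holds and, together with~(i), $y_F$ is a stable equilibrium. Conversely, if $A_F\le 0$, then picking any nonzero $u\in\Us$ gives $\E_{\rm qcl}''(y_F)[u,u]=A_F\|u'\|_{\ell^2_\eps}^2\le 0$, violating~(ii), so $y_F$ is not stable.

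I do not expect a genuine obstacle: the proposition is in essence just the identity $\E_{\rm qcl}''(y_F)[u,u]=A_F\|u'\|_{\ell^2_\eps}^2$ combined with the observation that $u\mapsto\|u'\|_{\ell^2_\eps}$ is non-degenerate on $\Us$. The only point worth isolating --- and it will reappear in the QNL and atomistic arguments --- is that the zero-mean normalization removes the constant displacements on which the discrete gradient vanishes; everything else has already been computed in the lines preceding the statement.
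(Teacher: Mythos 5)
Your proposal is correct and follows exactly the route the paper takes: the criticality of $y_F$ via the zero mean of $u'$, and the identity $\E_{\rm qcl}''(y_F)[u,u]=A_F\|u'\|_{\ell^2_\eps}^2$, from which the equivalence with $A_F>0$ is immediate. The only addition is your explicit check that $\|u'\|_{\ell^2_\eps}$ is non-degenerate on $\Us$, which the paper leaves implicit; this is a correct and harmless elaboration.
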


\medskip Comparing Proposition \ref{th:ana:qcl_stab} with Proposition
\ref{th:ana:stab_a} we see a first discrepancy, albeit small, between
the stability of the full atomistic model and the local QC approximation (or
the Cauchy--Born approximation). In Section \ref{sec:discus} we will
show that this leads to a negligible error in the computed critical
load.

\subsection{Quasi-nonlocal coupling}

By the construction of the QNL coupling rule at the interface,
the deformation $y = y_F$ is an equilibrium of $\E_{\rm qnl}$
\cite{Shimokawa:2004}.  The second variation of $\E_{\rm qnl}$ evaluated at $y =
y_F$ is given by
\begin{equation*}
  \begin{split}
    \E_{\rm qnl}''(y_F)[u,u] = \eps\sum_{\ell = -N+1}^N \phi_F'' |u_\ell'|^2
    +~& \eps \sum_{\ell \in \As_{\rm qnl}} \phi_{2F}'' |u_\ell' + u_{\ell+1}'|^2  \\
    +~& \eps \sum_{\ell \in \Cs_{\rm qnl}} 4 \phi_{2F}''
    ( \smfrac12 |u_\ell'|^2 + \smfrac12 |u_{\ell+1}'|^2 ).
  \end{split}
\end{equation*}

We use \eqref{eq:rewrite_nnn} to rewrite the second group on the
right-hand side (the nonlocal interactions) in the form
\begin{displaymath}
  \eps \sum_{\ell = -K-1}^{K+1}  \phi_{2F}'' |u_{\ell}'+u_{\ell+1}'|^2
  = \eps \sum_{\ell = -K-1}^{K+1} \big(2\phi_{2F}'' (|u_\ell'|^2 + |u_{\ell+1}'|^2)
  - \eps^2 \phi_{2F}'' |u_\ell''|^2\big),
\end{displaymath}
to obtain
\begin{equation*}
  \E_{\rm qnl}''(y_F)[u,u] = \eps \sum_{\ell = -N+1}^{N} A_F |u_\ell'|^2
  + \eps \sum_{\ell = -K-1}^{K+1} (-\eps^2 \phi_{2F}'') |u_\ell''|^2.
\end{equation*}
Except in the case $K \in \{N-1, N\}$, it now follows immediately that
$y_F$ is stable in the QNL approximation if and only if $A_F > 0$.

\begin{proposition}\label{th:ana:qnl_stab}
  Suppose that $K < N-1$ and that $\phi_{2F}'' \leq 0$, then $y_F$ is
  stable in the QNL approximation if and only if $A_F > 0$.
\end{proposition}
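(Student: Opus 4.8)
The plan is to start from the closed-form expression for the QNL Hessian derived just above the statement, namely
\[
  \E_{\rm qnl}''(y_F)[u,u] = A_F \| u' \|_{\ell^2_\eps}^2
  + (-\eps^2 \phi_{2F}'') \sum_{\ell = -K-1}^{K+1} \eps |u_\ell''|^2,
\]
and read off stability directly. Since $\phi_{2F}'' \le 0$ by hypothesis, the coefficient $-\eps^2\phi_{2F}''$ is nonnegative, so the second sum is a nonnegative quantity for every $u \in \Us$. Hence if $A_F > 0$ then $\E_{\rm qnl}''(y_F)[u,u] \ge A_F\|u'\|_{\ell^2_\eps}^2$, and since $u \mapsto \|u'\|_{\ell^2_\eps}$ is a norm on $\Us$ (a zero-mean $2N$-periodic chain with vanishing backward difference is constant, hence zero), this is strictly positive for $u \ne 0$; thus $y_F$ is stable. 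This is the easy direction and essentially immediate.

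For the converse, I would argue the contrapositive: assume $A_F \le 0$ and exhibit a nonzero $u \in \Us$ with $\E_{\rm qnl}''(y_F)[u,u] \le 0$. The natural candidate is a smooth, low-frequency displacement whose second difference is as small as possible — concretely the function with $u_\ell' = \sin(\eps\ell\pi)$ (equivalently $u_\ell$ proportional to $\cos$-type data), which is exactly the minimizer of the Rayleigh quotient $\|\psi''\|_2/\|\psi'\|_2$ used in Section~\ref{sec:ana:a}. For this $u$ one has $\sum_\ell \eps|u_\ell''|^2 = \mu_\eps^2 \|u'\|_{\ell^2_\eps}^2$, but crucially the interface sum $\sum_{\ell=-K-1}^{K+1}\eps|u_\ell''|^2$ is only a fraction of the full sum, and in any case it is bounded by a constant (independent of $u$ after normalizing $\|u'\|_{\ell^2_\eps}=1$). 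So $\E_{\rm qnl}''(y_F)[u,u] = A_F + (-\eps^2\phi_{2F}'')\cdot C_{K,\eps}$ with $A_F \le 0$; if $A_F < 0$ this is negative once $\eps$ is small, and if $A_F = 0$ one must be slightly more careful. To handle $A_F \le 0$ cleanly and uniformly, I would instead \emph{localize}: pick $u$ supported (in strain) far from the interface, i.e. choose a nonzero $u \in \Us$ with $u_\ell' = 0$ for all $\ell \in \{-K-1,\dots,K+2\}$ — possible precisely because $K < N-1$ leaves room in the continuum region for a bump — so that $u_\ell'' = 0$ for every $\ell$ in the interface window $\{-K-1,\dots,K+1\}$. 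Then the interface correction term vanishes identically and $\E_{\rm qnl}''(y_F)[u,u] = A_F\|u'\|_{\ell^2_\eps}^2 \le 0$, which contradicts stability. This shows $A_F > 0$ is necessary.

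The one genuine thing to check — and the only place the hypothesis $K < N-1$ is really used — is that such a strain-localized test function exists in $\Us$: one needs a $2N$-periodic, zero-mean chain $u$ whose backward differences vanish on $\{-K-1,\dots,K+2\}$ but not everywhere. Since the constraint ``$u_\ell' = 0$ on a block'' just says $u$ is constant on the complementary arc of at most $2N - (K+4)$ consecutive indices, and $2N - (K+4) \ge N - K + (N-4) \ge 1$ is a nontrivial arc when $K < N - 1$ (for $N$ not too small; the genuinely small-$N$ cases are vacuous or trivial), we can freely prescribe $u$ there, then subtract its mean to land in $\Us$. The resulting $u$ is nonzero because its strain is nonzero somewhere in the continuum region, completing the argument. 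I expect no real obstacle here beyond bookkeeping the index ranges; the substance of the proposition is entirely contained in the already-derived Hessian formula.
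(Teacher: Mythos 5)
Your argument is the one the paper intends: the paper gives no separate proof of Proposition~\ref{th:ana:qnl_stab} beyond the remark that it ``follows immediately'' from the identity $\E_{\rm qnl}''(y_F)[u,u] = A_F\|u'\|_{\ell^2_\eps}^2 + \eps\sum_{\ell=-K-1}^{K+1}(-\eps^2\phi_{2F}'')|u_\ell''|^2$, and your two directions --- nonnegativity of the interface term for sufficiency, and a strain field supported away from the interface for necessity --- are exactly the intended filling-in. (Your first attempted converse via the sine eigenfunction can simply be deleted: as you note, it does not close the case $A_F=0$, and the localized test function supersedes it.)

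One piece of your bookkeeping is wrong, and it matters at the boundary of the hypothesis. Annihilating $u_\ell''$ for all $\ell\in\{-K-1,\dots,K+1\}$ forces $u'$ to be constant on the index window $\{-K-1,\dots,K+2\}$, which has $2K+4$ elements, not $K+4$ as you write. Since $\sum_{\ell=-N+1}^{N}u_\ell'=0$ automatically by periodicity, a nonzero admissible $u$ whose strain is constant on that window exists only if the complementary arc has at least two indices, i.e.\ $2N-(2K+4)\ge 2$, i.e.\ $K\le N-3$. For $K=N-2$, which the hypothesis $K<N-1$ permits, the window is all of $\{-N+1,\dots,N\}$; then the only $u\in\Us$ with vanishing interface term is $u=0$, and indeed when $\phi_{2F}''<0$ strictly the Hessian is positive definite even at $A_F=0$, because the interface sum is (up to the factor $-\phi_{2F}''$) the Neumann-path Dirichlet energy of the zero-mean chain $u'$, which vanishes only for $u'\equiv 0$. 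So the ``only if'' direction genuinely fails at $K=N-2$, and your proof is complete only for $K\le N-3$. The paper is equally cavalier here --- it excludes only $K\in\{N-1,N\}$ --- so this is a shared and in practice harmless edge case, but a clean statement should either assume $K\le N-3$ or treat $K=N-2$ separately.
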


\section{Stability Analysis of the Energy-based QC approximation}
\label{sec:ana:qce}
We will explain in Remark~\ref{rmbounds} that there exists $\smfrac 12
\le \lambda_K \le 1$ such that
\begin{displaymath}
  \E_{\rm qce}''(y_F) \quad
  \text{is positive definite if and only if}\quad
  A_F + \lambda_K \phi_{2F}'' > 0.
\end{displaymath}
However, $y_F$ is not a critical point of $\E_{\rm qce},$ so we must
be careful in extending the previous definition of stability to the
QCE approximation.
We cannot simply consider the positive-definiteness of
$\E_{\rm qce}''(y_F).$
Instead, we analyze the second variation $\E_{\rm qce}''(y_{{\rm qce}, F})$
where $y_{{\rm qce}, F} \in \Ys_F$ solves the QCE equilibrium equations
\begin{displaymath}
  \frac{\partial \E_{\rm qce}}{\partial y_\ell} (y_{{\rm qce}, F}) = 0 \quad
  \text{for } \ell = -N+1, \dots, N,
\end{displaymath}
or equivalently
\begin{equation}
  \label{eq:qce:nonlin}
  \E_{\rm qce}'(y_{{\rm qce}, F})[u] = 0 \qquad \text{ for all } u \in \Us.
\end{equation}
We will see that, when the second-neighbour interactions are small
compared with the first neighbour interactions (which we make precise
in Lemma~\ref{th:asymptotic_expansion}), there is a locally
unique solution $y_{{\rm qce}, F}$ of the equilibrium equations, which is
the correct QCE counterpart of $y_F$.  We will then derive a stability
criterion for the equilibrium deformation $y_{{\rm qce}, F}.$

In Proposition~\ref{th:qce:final_stab_result} below, we derive an
upper bound for the coercivity of $\E_{\rm qce}''(y_{{\rm qce}, F})[u,\,u]$
with respect to the norm $\| u'\|_{\ell^2_\eps}.$ Even though the
derivation of this upper bound is only rigorous for strains bounded
away from the atomistic critical strain, it clearly identifies a source
of instability that cannot be found by analyzing, for example,
$\E_{\rm qce}''(y_F)$. Moreover, we will present numerical experiments
in Section~\ref{sec:discus} showing that the critical strain predicted in
our following analysis gives a remarkably accurate approximation to
actual QCE critical strain.

In Section \ref{sec:discus}, we consider the critical strain for
each approximation, namely the point at which the appropriate equilibrium
deformation (either $y_F$ or $y_{{\rm qce},F}$) becomes unstable.
We will see later in this
section, as well as in Section~\ref{sec:discus} that predicting the
loss of stability for the QCE approximation using $y_F$
greatly underestimates the error in approximating the atomistic
critical
strain by the QCE critical strain.

Due to the nonlinearity and nonlocality of the interaction law, we
cannot compute $y_{{\rm qce}, F}$ explicitly. Instead, we will construct an
approximation $\hat{y}_{{\rm qce},F}$ which is accurate whenever
second-neighbour terms are dominated by first-neighbour terms. In the
following paragraphs, we first present a semi-heuristic construction,
motivated by the analysis in \cite{Dobson:2008c}, and then a rigorous
approximation result, the proof of which is given in Appendix
\ref{sec:app_qce_eq}.

In (\ref{eq:app:DEqce}) in the appendix, we provide an explicit
representation of $\E_{\rm qce}'$. Inserting $y = y_F$, we obtain a
variational representation of the atomistic-to-continuum interfacial
truncation error terms that are often dubbed ``ghost forces,''
\begin{equation}
  \label{eq:qce:ghost_force}
  \begin{split}
    \E_{\rm qce}'(y_F)[u] &~= \eps \smfrac12 \phi_{2F}' \big\{
    u_{-K-1}' - u_{-K+1}' - u_K' + u_{K+2}' \big\} \\
    &~:= - \phi_{2F}' \< \hat g', u' \> \qquad \text{ for } u \in \Us,
  \end{split}
\end{equation}
where
\begin{equation}
  \label{eq:qce:defn_uhat}
  \hat g_\ell' = \cases{
    -\smfrac12, & \ell = -K-1, K+2, \\
    \smfrac12, & \ell = -K+1, K, \\
    0, & \text{otherwise}.
  }
\end{equation}
We note that (\ref{eq:qce:ghost_force}) makes our claim precise that
$y_F$ is not a critical point of $\E_{\rm qce}$.

Motivated by property (iii) of the interaction potential $\phi$, we
will assume that the parameters
\begin{displaymath}
  \delta_1 := \frac{\phi'(2F)}{\phi''(F)} \quad \text{and} \quad
  \delta_2 := \frac{- \phi''(2F)}{\phi''(F)}
\end{displaymath}
are small, and construct an approximation for $y_{{\rm qce}, F}$ which is
asymptotically of second order as $\delta_1, \delta_2 \to 0$. Although
such an approximation will not be valid near the critical strain for
the QCE approximation, it will give us a rough impression how the
inconsistency affects the stability of the system.  \alert{We note that
$\delta_1$ is scale invariant since we used a scaled interaction
potential, $\eps\phi(r/\eps),$ in our definition of the stored
energy \eqref{bond}.}

A non-dimensionalization of (\ref{eq:qce:ghost_force}) shows that
$y_{{\rm qce}, F} = y_{F} + O(\delta_1)$. If $\delta_1$ is small, then we
can linearize (\ref{eq:qce:nonlin}) about $y_F$ and find the
first-order correction $y_{\rm lin} \in \Ys_F$, which is given by
\begin{equation}
  \label{eq:qce:lin_correction}
  \E_{\rm qce}''(y_F)[ y_{\rm lin} - y_F, u] = - \E_{\rm qce}'(y_F)[u]
  = \phi_{2F}' \< \hat g', u' \> \qquad \text{ for all } u \in \Us.
\end{equation}
We note that this linear system is precisely the one analyzed in
detail in \cite{Dobson:2008c}. However, instead of using the implicit
representation of $y_{\rm lin} - y_F$ obtained there, we use the
assumption that $\delta_2$ is small to simplify
(\ref{eq:qce:lin_correction}) further and obtain a more explicit
approximation.

Writing out the bilinear form $\E_{\rm qce}''(y_F)[u, u]$ explicitly
(using (\ref{eq:app:qce_hess_2}) as a starting point) gives
\begin{equation}
  \label{eq:Eqce_decomposition_final}
  \begin{split}
    \E_{\rm qce}''(y_F)[u,u] =~& \dots
    + \eps \sum_{\ell = 0}^N \phi_F'' |u_\ell'|^2
    + \eps \sum_{\ell = 0}^{K-1} \phi_{2F}'' |u_\ell' + u_{\ell+1}'|^2
    + \eps\sum_{\ell = K+2}^N 4 \phi_{2F}'' |u_\ell'|^2 \\
    & + \smfrac{\eps}{2} \phi_{2F}''|u_K'+u_{K+1}'|^2
    + \smfrac{\eps}{2} \phi_{2F}'' |u_{K+1}'+u_{K+2}'|^2
    + \smfrac{\eps}{2} 4 \phi_{2F}'' |u_{K+1}'|^2,
  \end{split}
\end{equation}
where we have only displayed the terms in the right half of the domain
and indicated the terms in the left half by dots.  Ignoring all terms
involving $\phi_{2F}''$, which are of order $\delta_2$ relative to the
remaining terms, we arrive at the following approximation of
\eqref{eq:qce:lin_correction}:
\begin{equation*}
  \phi_F'' \big\< (\hat y_{{\rm qce}, F} - y_F)', u' \big\> = \phi_{2F}' \< \hat g', u' \>
  \qquad \text{ for all } u \in \Us,
\end{equation*}
the solution of which is given by
\begin{equation*}
  \hat y_{{\rm qce}, F} = y_F + \delta_1 \hat g.
\end{equation*}

The following lemma makes this approximation rigorous. A complete
proof is given in Appendix \ref{sec:app_qce_eq}.

\begin{lemma}
  \label{th:asymptotic_expansion}
  If $\delta_1$ and $\delta_2$ are sufficiently small,
  then there exists a (locally unique) solution $y_{{\rm qce}, F}$ of
  \eqref{eq:qce:nonlin} such that
  \begin{displaymath}
    \| (y_{{\rm qce}, F} - \hat y_{{\rm qce}, F})' \|_{\ell^\infty}
    \leq C (\delta_1^2 + \delta_1\delta_2),
  \end{displaymath}
  where $C$ may depend on $\phi$ (and its derivatives) and on $F$, but
  is independent of $\eps$.
\end{lemma}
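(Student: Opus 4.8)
The plan is to establish Lemma~\ref{th:asymptotic_expansion} by a quantitative inverse-function-theorem (Newton--Kantorovich) argument centred at the explicit approximate equilibrium $\hat y_{{\rm qce}, F} = y_F + \delta_1 \hat g$. Writing $u = y - y_F \in \Us$ and defining the residual map $G(u)[v] := \E_{\rm qce}'(y_F + u)[v]$ for $v \in \Us$, a solution of \eqref{eq:qce:nonlin} is precisely a zero of $G$, and we look for one near $\hat u := \delta_1 \hat g$. The argument rests on two estimates: a \emph{consistency} bound on the residual $G(\hat u)$, and a \emph{stability} bound giving an $\eps$-independent control of $\E_{\rm qce}''(y_F + u)^{-1}$ for $u$ in a fixed neighbourhood of $\hat u$.

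For the consistency bound I would split
\[
  G(\hat u) = \big[\, G(0) + G'(0)\hat u \,\big] + \big[\, G(\hat u) - G(0) - G'(0)\hat u \,\big].
\]
The first bracket equals $\E_{\rm qce}'(y_F) + \E_{\rm qce}''(y_F)[\delta_1 \hat g, \cdot\,]$, and the correction $\hat u = \delta_1 \hat g$ was constructed exactly so as to cancel the ghost-force term \eqref{eq:qce:ghost_force} against the $\phi_F''$-terms of the Hessian \eqref{eq:Eqce_decomposition_final} (using $\phi_{2F}'/\phi_F'' = \delta_1$). What is left over is $\delta_1$ times the $\phi_{2F}''$-terms of \eqref{eq:Eqce_decomposition_final} evaluated on $\hat g'$; since $\hat g'$ is supported on $O(1)$ bonds near the interface and has unit-order size, this contribution is $O(|\phi_{2F}''|\,\delta_1) = O(\phi_F''\,\delta_1\delta_2)$. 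The second bracket is the quadratic Taylor remainder of $G$ at $u = 0$, and because $\phi \in \CC^4$ and $\|\hat u'\|_{\ell^\infty} = \smfrac12\delta_1$, it is $O(\delta_1^2)$. Thus, measured in the norm dual to the strain-$\ell^\infty$ norm, $\|G(\hat u)\| \le C(\delta_1^2 + \delta_1\delta_2)$.

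For the stability bound I would invoke Remark~\ref{rmbounds}: $\E_{\rm qce}''(y_F)$ is positive definite if and only if $A_F + \lambda_K\phi_{2F}'' > 0$, and in the regime under consideration (where $\phi_F'' > 0$ and $\delta_2$ is small) this quantity equals $\phi_F''\big(1 - (4 + \lambda_K)\delta_2\big) \ge \smfrac12\phi_F'' > 0$; hence $\E_{\rm qce}''(y_F)$ is coercive with respect to $\|u'\|_{\ell^2_\eps}$ with a constant independent of $\eps$ and $N$. Since the Hessian entries depend on the strains $y_\ell'$ only through $\phi''$ and $\phi'''$, replacing $y_F$ by $y_F + u$ perturbs $\E_{\rm qce}''$ by $O(\|u'\|_{\ell^\infty})$, so coercivity --- and with it invertibility --- survives on a fixed $\ell^\infty$-strain ball around $\hat u$ once $\delta_1$ is small. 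The one genuinely delicate point is to upgrade this $\ell^2_\eps$-coercivity to an $\eps$-uniform bound on $\E_{\rm qce}''(y_F+u)^{-1}$ acting between the strain-$\ell^\infty$ norm and its dual, which is what the $\ell^\infty$ conclusion of the lemma demands: a naive inverse inequality would cost a factor $\eps^{-1/2}$. I would instead use the structure of $\E_{\rm qce}''$ as the uniformly positive banded discrete operator $\phi_F''(-\eps^2\Delta)$ plus localized interface corrections, together with the fact (inherited from the consistency step, and from the analysis of the linear problem \eqref{eq:qce:lin_correction} in~\cite{Dobson:2008c}) that all forcing data is supported near the interface --- in one dimension the periodic, mean-zero discrete Laplacian maps a localized mean-zero load to a bounded, piecewise-affine strain whose $\ell^\infty$ norm is $\eps$-independent.

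With these two ingredients the standard Newton--Kantorovich estimate closes the argument: writing $M$ for the bound on $\E_{\rm qce}''(\cdot)^{-1}$ on the ball and $L$ for the Lipschitz constant of $u \mapsto \E_{\rm qce}''(y_F + u)$ there, the smallness requirement $M^2 L\,\|G(\hat u)\| \lesssim 1$ is met once $\delta_1, \delta_2$ are small, producing a locally unique zero $u^\star = y_{{\rm qce},F} - y_F$ of $G$ with
\[
  \|(y_{{\rm qce},F} - \hat y_{{\rm qce},F})'\|_{\ell^\infty}
  = \|(u^\star - \hat u)'\|_{\ell^\infty}
  \le 2M\,\|G(\hat u)\|
  \le C(\delta_1^2 + \delta_1\delta_2),
\]
with $C$ depending only on $F$ and on finitely many derivatives of $\phi$ near $F$ and $2F$, but not on $\eps$. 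I expect the entire difficulty of the proof to sit in the stability step --- the $\eps$-uniform $\ell^\infty$ inversion of the QCE Hessian --- while the consistency split and the fixed-point iteration are routine once the two small parameters $\delta_1$ and $\delta_2$ are tracked separately.
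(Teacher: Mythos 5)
Your overall strategy --- a quantitative inverse function theorem centred at $\hat y_{{\rm qce},F}$, with the residual split into the linearization defect $\E_{\rm qce}'(y_F)[\cdot] + \delta_1\E_{\rm qce}''(y_F)[\hat g,\cdot]$ (of size $O(\phi_F''\delta_1\delta_2)$, since $\hat g$ was built to cancel the ghost force against the $\phi_F''$ part of the Hessian) plus a quadratic Taylor remainder of size $O(\phi_F''\delta_1^2)$ --- is exactly the paper's proof in Appendix B, including the choice of the strain-$\ell^\infty$ norm and its dual as the working spaces. The consistency half of your argument matches the paper's estimates \eqref{app:qce:resest_1}--\eqref{app:qce:resest_2} essentially line for line.

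The stability half is where your proposal diverges, and where it has a gap. You start from $\ell^2_\eps$-coercivity of $\E_{\rm qce}''(y_F)$ (Remark~\ref{rmbounds}), acknowledge that passing to the $\ell^\infty$ setting naively costs $\eps^{-1/2}$, and propose to rescue this by observing that the forcing is localized at the interface. But the inverse function theorem (Lemma~\ref{th:inverse_fcn_thm}) requires a bound on $\|\E_{\rm qce}''(\hat y_{{\rm qce},F})^{-1}\|_{L(\Us^{-1,\infty},\,\Us^{1,\infty})}$ as an \emph{operator norm}, i.e.\ uniformly over \emph{all} loads in $\Us^{-1,\infty}$, not only the particular localized residual $G(\hat u)$; the iterates of the underlying fixed-point map are not localized, so your localization argument does not close the loop. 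The paper avoids coercivity altogether: it writes $\E_{\rm qce}''(\hat y_{{\rm qce},F}) = \phi_F'' L_1 + R$ where $L_1(u)[v] = \<u',v'\>$, and shows $\|R\|_{L(\Us^{1,\infty},\,\Us^{-1,\infty})} \leq \phi_F''(\smfrac12 c_2\delta_1 + c_1\delta_2)$ --- every correction term, interfacial \emph{and} bulk next-nearest-neighbour alike, carries a factor $\phi_{2F}''$ or a $\phi'''$, so smallness comes from $\delta$, not from localization. Combined with the elementary one-dimensional fact that $\|L_1^{-1}\|_{L(\Us^{-1,\infty},\,\Us^{1,\infty})}\leq 2$ (any $T\in\Us^{-1,\infty}$ is $\<g,v'\>$ for bounded $g$, and then $u' = g - \bar g$ explicitly), a Neumann-series perturbation gives $\|\E_{\rm qce}''(\hat y_{{\rm qce},F})^{-1}\|\leq 4/\phi_F''$ uniformly in $\eps$. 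Replacing your coercivity-plus-localization step by this perturbation-of-$L_1$ argument is what you need to make the proof rigorous; the rest of your outline then goes through as written.
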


\medskip \noindent From now on, we will also assume that $\delta_3 :=
\phi_{2F}''' / \phi_{F}''$ is small, and combine the three small
parameters into a single parameter
\begin{displaymath}
  \delta := \max(|\delta_1|, |\delta_2|, |\delta_3|).
\end{displaymath}
We will neglect all terms which are of order $O(\delta^2)$. \alert{A
  careful discussion of the parameter $\delta$ and the validity of the
  asymptotic analysis is given in Remark \ref{rem:delta_discussion}.}

In the following, we will again only show terms appearing on the right
half of the domain. Our goal in the remainder of this section is to
obtain an estimate for the smallest eigenvalue of $\E_{\rm qce}''(\hat
y_{{\rm qce}, F})$. Using (\ref{eq:app:qce_hess_2}), we can represent
$\E_{\rm qce}''(\hat y_{{\rm qce}, F})$ as
\begin{align*}
  \E_{\rm qce}''(\hat y_{{\rm qce}, F})[u,u] = \dots
  + \eps \sum_{\ell = 0}^{K-2} \big\{ A_F |u_\ell'|^2
     - \eps^2 \phi_{2F}'' |u_\ell''|^2 \big\}
  + \eps \sum_{\ell = K+3}^N A_F &\,|u_\ell'|^2 \\
  + \eps \big\{\phi_F'' + 2 \phi_{2F}'' + 2 \phi''(2F+\smfrac12\delta_1)\big\}
  &\,|u_{K-1}'|^2 \\
  + \eps \big\{\phi''(F+\smfrac12\delta_1)+ 3\phi''(2F+\smfrac12\delta_1)\big\}
  &\, |u_{K}'|^2 \\
  + \eps \big\{ \phi_F'' + \phi''(2F-\smfrac12\delta_1)
       + \phi''(2F + \smfrac12\delta_1)
  + 2 \phi_{2F}'' \big\} &\, |u_{K+1}'|^2 \\
  + \eps \big\{ \phi''(F-\smfrac12\delta_1) + \phi''(2F-\smfrac12\delta_1)
  + 4 \phi''(2F-\delta_1) \big\} &\, |u_{K+2}'|^2  \\
  - \eps^3 \big\{
  \phi''(2F+\smfrac12\delta_1) |u_{K-1}''|^2
  + \smfrac12 \phi''(2F+\smfrac12\delta_1) |u_K''|^2
  + \smfrac12 \phi''(2F-\smfrac12\delta_1) |u_{K+1}''|^2 \big\}. \hspace{-1.2cm} &
\end{align*}
We expand all terms containing $\delta_1$ and neglect all terms which
are of order $O(\delta^2)$ relative to $\phi''_F,$ which is the order
of magnitude of the coefficient of the diagonal term of $\E_{\rm
  qce}''(\hat{y}_{{\rm qce}, F}).$ For example, we have, for some $\vartheta
\in (0, 1)$,
\begin{displaymath}
 \frac{\phi''(2F+\smfrac12\delta_1)}{\phi''_F}
 = \frac{\phi_{2F}''}{\phi_F''} +
 \frac{\phi'''(2F + \vartheta \smfrac12\delta_1)}{\phi''_F} (\smfrac12\delta_1)
 = \frac{\phi_{2F}''}{\phi_F''} + O(\delta_3\delta_1),
\end{displaymath}
as $\delta_1,\delta_3 \to 0$. Thus, the $O(\delta_1)$ perturbation of a
second-neighbour term will not affect our final result. On the other
hand, expanding a nearest neighbour term gives
\begin{displaymath}
  \frac{\phi''(F + \smfrac12\delta_1)}{\phi''_F}
  = 1 + \frac{\phi_F'''}{\phi''_F} (\smfrac12\delta_1) + O(\delta_1^2),
\end{displaymath}
as $\delta_1 \to 0$.  Proceeding in the same fashion for the
remaining terms, we arrive at
\begin{align}
  \notag 
  \E_{\rm qce}''(\hat y_{{\rm qce}, F})[u,u] =~& \dots
  + \eps \sum_{\ell = 0}^{K-1} A_F |u_\ell'|^2
  - \eps^3 \sum_{\ell = 0}^{K-1} \phi_{2F}'' |u_\ell''|^2
  + \eps \sum_{\ell = K+3}^N A_F |u_\ell'|^2 \\
  \label{eq:qce:hatH_asymp}
  &  + \eps \big\{ A_F + (\smfrac12 \delta_1 \phi_F''' - \phi_{2F}'') \big\}
  |u_{K}'|^2 + \eps A_F |u_{K+1}'|^2  \\
  \notag 
  &
  + \eps \big\{ A_F - (\smfrac12\delta_1\phi_F''' - \phi_{2F}'') \big\} |u_{K+2}'|^2
  - \eps^3 \smfrac12 \phi_{2F}'' \big\{ |u_{K}''|^2 + |u_{K+1}''|^2 \big\} \\
  \notag 
  &  + O\big(\phi_F'' \delta^2 \|u'\|_{\ell^2_\eps}^2\big).
\end{align}

Clearly, our focus must be the coefficients of the terms $|u_K'|^2$
and $|u_{K+2}'|^2$, and in particular, on the quantity
\begin{equation}
  \label{sign}
  \smfrac12\delta_1\phi_F''' - \phi_{2F}'' = \frac{\phi_F''' \phi_{2F}'
    - 2 \phi_F'' \phi_{2F}''}{2 \phi_F''}.
\end{equation}
Depending on the sign of $\smfrac12\delta_1\phi_F''' - \phi_{2F}''<0,$
we see that the ``weakest bonds'' are either between atoms $K-1$ and
$K$ (as well as $-K+1$ and $-K$) or between atoms $K+1$ and $K+2$ (as
well as $-K-1$ and $-K-2$).

If $\smfrac12\delta_1\phi_F''' - \phi_{2F}''<0,$ we insert the test
function $w \in \Us$, defined by
\begin{displaymath}
  w_\ell' = \cases{
    (\smfrac12 \eps^{-1})^{1/2}, & \ell = K, \\
    -(\smfrac12 \eps^{-1})^{1/2}, & \ell = -K+1, \\
    0, & \text{otherwise},}
\end{displaymath}
into (\ref{eq:qce:hatH_asymp}) to obtain
\begin{equation}
  \label{eq:qce:stab_final}
  \begin{split}
    \inf_{\substack{u \in \Us \\ \| u'\|_{\ell^2_\eps} = 1} }
    \E_{\rm qce}''(\hat{y}_{{\rm qce}, F})[u,u]
    \leq~& \E_{\rm qce}''(\hat y_{{\rm qce}, F})[w,w] \\[-4mm]
    =~& A_F \Big\{ 1  + \frac{\phi_F''' \phi_{2F}'
      - 5\phi_F'' \phi_{2F}''}{2 A_F \phi_F''} + O(\delta^2) \Big\}.
  \end{split}
\end{equation}
Note that the constant $2$ in front of $\phi_F'' \phi_{2F}''$ was
replaced by $5$ due to the strain gradient terms in
(\ref{eq:qce:hatH_asymp}) which slightly stabilize the system.

If $\smfrac12\delta_1\phi_F''' - \phi_{2F}''>0,$ we use the alternative test
function $w \in \Us$, defined by
\begin{equation}
  \label{alt}
  w_\ell' = \cases{
    (\smfrac12 \eps^{-1})^{1/2}, & \ell = K+2, \\
    -(\smfrac12 \eps^{-1})^{1/2}, & \ell = -K-1, \\
    0, & \text{otherwise},}
\end{equation}
to test (\ref{eq:qce:hatH_asymp}), which gives
\begin{equation}
  \label{eq:qce:stab_final3}
  \begin{split}
    \inf_{\substack{u \in \Us \\ \| u'\|_{\ell^2_\eps} = 1} }
    \E_{\rm qce}''(\hat{y}_{{\rm qce}, F})[u,u]
    \leq~& \E_{\rm qce}''(\hat y_{{\rm qce}, F})[w,w] \\[-4mm]
    =~& A_F \Big\{ 1 - \frac{ \phi_F''' \phi_{2F}'-\phi_F'' \phi_{2F}''
      }{2 A_F \phi_F''} + O(\delta^2) \Big\}.
  \end{split}
\end{equation}
In this case, only a single strain gradient term affects the final
result, and therefore this correction is only small.

Due to the stabilizing effect of the strain gradient terms for our
perturbation, the right hand sides of~\eqref{eq:qce:stab_final}
and~\eqref{eq:qce:stab_final3} might both be bounded below by $A_F,$
so our estimate will involve a $\min$ over three terms.
Recalling that $y_{{\rm qce}, F} = \hat{y}_{{\rm qce}, F} + O(\delta^2),$ we obtain the
following result:

\begin{proposition}
  \label{th:qce:final_stab_result}
  There exist constants $\hat{\delta}$ and $\hat{C}$, which may depend
  on $\phi$ and its derivatives and on $F$ but not on $\eps$, such
  that, if $\delta \leq \hat{\delta}$, then
  \begin{equation}
    \label{eq:qce:stab_final2}
    \begin{split}
      \inf_{\substack{u \in \Us \\ \| u'\|_{\ell^2_\eps} = 1} }
      \E_{\rm qce}''(y_{{\rm qce}, F})[u,u] \leq \phi_F'' \Big( \min\!\Big\{\,&
       1+\frac{3\phi_{2F}''}{\phi_F''}
      \pm \Big( \frac{\phi_F'''\phi_{2F}'}{2|\phi_{F}''|^2}
		- \frac{3}{2} \frac{\phi_{2F}''}{\phi_F''} \Big),
       \frac{A_F}{\phi_F''} \Big\}
      + \hat{C} \delta^2 \, \Big).
    \end{split}
  \end{equation}
\end{proposition}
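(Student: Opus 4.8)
The plan is to assemble estimate \eqref{eq:qce:stab_final2} from three ingredients that, up to bookkeeping, are already in place: the explicit asymptotic representation \eqref{eq:qce:hatH_asymp} of the Hessian at the approximate equilibrium $\hat y_{{\rm qce}, F}$; the strain bound $\|(y_{{\rm qce}, F} - \hat y_{{\rm qce}, F})'\|_{\ell^\infty} \le C(\delta_1^2 + \delta_1\delta_2)$ from Lemma~\ref{th:asymptotic_expansion}; and the three explicit test functions, namely the one displayed just before \eqref{eq:qce:stab_final}, the one in \eqref{alt}, and a localized test function supported in the continuum region. Throughout, $\hat\delta$ is chosen small enough that Lemma~\ref{th:asymptotic_expansion} applies (so that $y_{{\rm qce}, F}$ exists and is locally unique), and $\eps$ is small enough that $K + 4 \le N$.

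First I would reduce $\E_{\rm qce}''(y_{{\rm qce}, F})$ to the explicit quadratic form. Writing $\E_{\rm qce}''(y)[u,u]$ as a finite sum of terms $\phi''(\text{strain of }y)\,|(\cdot)'|^2$ together with the finitely many interfacial and bulk-atomistic strain-gradient terms $\eps^3\phi''(\text{strain of }y)\,|u_\ell''|^2$, and Taylor-expanding each coefficient to first order in $y$, the regularity $\phi \in \CC^4$ and the $O(\delta^2)$ strain bound of Lemma~\ref{th:asymptotic_expansion} give
\begin{displaymath}
  \big| \E_{\rm qce}''(y_{{\rm qce}, F})[u,u] - \E_{\rm qce}''(\hat y_{{\rm qce}, F})[u,u] \big|
  \;\le\; C\, \delta^2\, \|u'\|_{\ell^2_\eps}^2 \qquad \text{for all } u \in \Us,
\end{displaymath}
where the elementary inequality $\eps^3 |u_\ell''|^2 \le 2\eps(|u_\ell'|^2 + |u_{\ell+1}'|^2)$ is what lets each contribution be controlled by a multiple of $\|u'\|_{\ell^2_\eps}^2$, and where $C$ is $\eps$-independent because all the remainder estimates are of the form (coefficient bounded by $C\delta^2$) times (a sum of quadratics in $u'$ bounded by $C'\|u'\|_{\ell^2_\eps}^2$), with no hidden factor of $N=1/\eps$. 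Combining this with the derivation of \eqref{eq:qce:hatH_asymp}, which is itself an $O(\phi_F''\delta^2\|u'\|_{\ell^2_\eps}^2)$ approximation of $\E_{\rm qce}''(\hat y_{{\rm qce}, F})$, we obtain
\begin{displaymath}
  \inf_{\substack{u \in \Us \\ \|u'\|_{\ell^2_\eps} = 1}} \E_{\rm qce}''(y_{{\rm qce}, F})[u,u]
  \;\le\; Q[w,w] + \hat C\, \delta^2
\end{displaymath}
for every $w \in \Us$ with $\|w'\|_{\ell^2_\eps} = 1$, where $Q$ is the quadratic form on the right-hand side of \eqref{eq:qce:hatH_asymp} with its $O(\delta^2)$ error term deleted.

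Second, I would evaluate $Q$ at three admissible test functions. Inserting the function displayed just before \eqref{eq:qce:stab_final} --- concentrated on the two weakest bonds, carrying the strains $u_K'$ and $u_{-K+1}'$ --- reproduces the computation \eqref{eq:qce:stab_final}, so $Q$ evaluated there equals $\phi_F''\bigl(1 + \smfrac{3\phi_{2F}''}{\phi_F''} + \bigl(\smfrac{\phi_F'''\phi_{2F}'}{2|\phi_F''|^2} - \smfrac{3}{2}\smfrac{\phi_{2F}''}{\phi_F''}\bigr)\bigr)$, i.e. the ``$+$'' term of the minimum in \eqref{eq:qce:stab_final2}; inserting instead the function \eqref{alt} --- concentrated on the bonds carrying $u_{K+2}'$ and $u_{-K-1}'$ --- reproduces \eqref{eq:qce:stab_final3} and produces the ``$-$'' term. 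Since the infimum is an infimum over all of $\Us$, both of these bounds are valid irrespective of the sign of $\smfrac12\delta_1\phi_F''' - \phi_{2F}''$. Finally, taking $v \in \Us$ with $v_{K+3}' = (\smfrac12\eps^{-1})^{1/2}$, $v_{K+4}' = -(\smfrac12\eps^{-1})^{1/2}$, and $v_\ell' = 0$ otherwise --- a function supported inside the continuum region $\{K+3, \dots, N\}$, where $Q$ reduces to $\eps\sum_\ell A_F |u_\ell'|^2$ and carries no strain-gradient terms --- gives $Q[v,v] = A_F$, which is $\phi_F''$ times the third entry $A_F/\phi_F''$ of the minimum in \eqref{eq:qce:stab_final2}. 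Since the infimum is bounded above by $Q[\cdot,\cdot] + \hat C\delta^2$ at each of these three functions, it is bounded above by the minimum of the three resulting values plus $\hat C\delta^2$; using $\phi_F'' > 0$ to pull this factor out of the minimum, and enlarging $\hat C$ once more to absorb the $O(\phi_F''\delta^2)$ terms above, this is exactly \eqref{eq:qce:stab_final2}.

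The main obstacle is the first step: making the reduction from $\E_{\rm qce}''(y_{{\rm qce}, F})$ to the explicit form $Q$ uniform both in $u$ over the sphere $\{\|u'\|_{\ell^2_\eps} = 1\}$ and in $\eps$. This is where one must (a) invoke Lemma~\ref{th:asymptotic_expansion} to keep the strain perturbation $y_{{\rm qce}, F} - \hat y_{{\rm qce}, F}$ at order $\delta^2$ rather than $\delta$; (b) use the norm bound $\sum_\ell \eps^3 |u_\ell''|^2 \le 4 \|u'\|_{\ell^2_\eps}^2$, so that the interfacial strain-gradient terms and their perturbations do not degrade the order; and (c) verify that all remainder constants --- which depend on $\phi$, its derivatives, and $F$ through strains that remain in a fixed compact subset of $(0,\infty)$ --- carry no hidden dependence on $N = 1/\eps$. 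Everything beyond this --- the evaluation of $Q$ on the three explicit test functions --- is the routine arithmetic already carried out in the lines preceding the statement.
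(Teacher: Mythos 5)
Your proposal is correct and follows essentially the same route as the paper: the paper's (very terse) proof likewise combines the test-function bounds \eqref{eq:qce:stab_final} and \eqref{eq:qce:stab_final3} for $\E_{\rm qce}''(\hat y_{{\rm qce},F})$ with the $O(\delta_1^2+\delta_1\delta_2)$ strain bound of Lemma~\ref{th:asymptotic_expansion} to transfer the estimate to $y_{{\rm qce},F}$, the third entry $A_F/\phi_F''$ of the minimum coming from the bulk (continuum-region) behaviour exactly as in your third test function. You have merely made explicit the bookkeeping (the perturbation estimate for the Hessian, the control of the strain-gradient terms by $\|u'\|_{\ell^2_\eps}^2$, and the continuum-region test function) that the paper leaves implicit.
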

\begin{proof}
  The bounds (\ref{eq:qce:stab_final}) and (\ref{eq:qce:stab_final3})
  are rigorous provided $\delta$ is sufficiently small so that $F -
  \smfrac12 \delta_1$ is bounded away from zero. Moreover, if $\delta$
  is sufficiently small, then Lemma \ref{th:asymptotic_expansion}
  gives a rigorous bound for the error $\| (y_{{\rm qce}, F} - \hat{y}_{\rm
    qce})' \|_{\ell^\infty}$ which only adds an additional
  $O(\delta^2)$ error to the estimate.
\end{proof}

For typical interaction potentials, we would expect that $\phi_F''' <
0$ (as $\phi_F''$ is decreasing), that $\phi_{2F}' > 0$, and we have
already postulated that $\phi_F'' > 0$ and $\phi_{2F}'' < 0$. Thus,
the two terms in the numerator of the right hand side of \eqref{sign}
have opposing sign and may, in principle even cancel each
other. However, we have found in numerical tests that for typical
potentials such as the Morse or Lennard--Jones potentials the first
term is dominant, that is, $ \smfrac12\delta_1\phi_F''' -
\smfrac{3}{2} \phi_{2F}''< \phi''_{2F} $
and
\begin{displaymath}
  \min\!\Big\{\,
  1+\frac{3\phi_{2F}''}{\phi_F''}
  \pm \left( \frac{\phi_F'''\phi_{2F}'}{2|\phi_{F}''|^2}
  - \frac{3}{2} \frac{\phi_{2F}''}{\phi_F''} \right),
  \frac{A_F}{\phi_F''} \Big\}=
  1 + \frac{3}{2} \frac{\phi_{2F}''}{\phi_F''}
  + \frac{\phi_F'''\phi_{2F}'}{2|\phi_{F}''|^2}
\end{displaymath}
in Proposition~\ref{th:qce:final_stab_result}.

\begin{remark}
  Proposition \ref{th:qce:final_stab_result} as well as the subsequent
  discussion clearly shows that the spurious QCE instability is due to
  a combination of the effect of the``ghost force'' error and of the
  anharmonicity of the atomistic potential.
\end{remark}

\begin{remark}
\label{rmbounds}
  A variant of the analysis presented above shows that $\E_{\rm
    qce}''(y_F)$ is positive definite if and only if $A_F + \lambda_K
  \phi_{2F}'' > 0$ where $\smfrac 12 \le \lambda_K \le 1$.  The lower
  bound can be obtained using the test function~\eqref{alt} in the
  bilinear form $\E_{\rm qce}''(y_F)[u,\,u]$ given explicitly by
  \eqref{eq:Eqce_decomposition_final}, while the upper bound can be
  obtained from the estimate
  \begin{displaymath}
    \E_{\rm qce}''(y_F)[u,\,u]\ge (A_F+\phi_{2F}'')\| u'\|_{\ell^2_\eps}^2
    \qquad\text{ for all } u \in \Us,
  \end{displaymath}
  which also follows from~\eqref{eq:Eqce_decomposition_final} (see
  also \cite[Lemma 2.1]{Dobson:2008b}). Thus, the lower bound is
  related to the second term in ~\eqref{eq:qce:stab_final2} which we
  have noted above is generally greater than the first term, and we
  can conclude that the critical strain for QCE obtained by linearizing
  about $y_F,$ rather than the equilibrium solution $y_{{\rm qce}, F},$
  significantly underestimates the loss of stability (see also Figure
  \ref{fig:crit_strains_a}).

  The study of the positive-definiteness of $\E_{\rm qce}''(y_F)$ is
  relevant to the stability of the ghost-force correction iteration
  and is discussed in more detail in \cite{qcf.iterative}.
\end{remark}

\begin{remark}
  \label{rem:delta_discussion}
  While our rigorous results, Lemma \ref{th:asymptotic_expansion} and
  Proposition \ref{th:qce:final_stab_result}, are proven only for
  sufficiently small $\delta$, one usually expects that such
  asymptotic expansions have a wider range of validity than that
  predicted by the analysis. For this reason, we have neglected to
  give more explicit bounds on how small $\delta$ needs to be.

  Nevertheless, a relatively simple asymptotic analysis such as the
  one we have presented cannot usually give complete information near
  the onset of instability. Our aim was mainly to demonstrate that the
  inconsistency at the interface leads to a decreased stability of the
  QCE approximation when compared to the full atomistic model or the
  consistent QC approximations. We will see in Section \ref{sec:discus} that,
  if we use \eqref{eq:qce:stab_final2} to predict the onset of
  instability for QCE, then we observe a fairly significant loss of
  stability of the QCE approximation when compared to the full
  atomistic model. In numerical experiments, we will also see that the
  prediction given by (\ref{eq:qce:stab_final2}) is qualitatively
  fairly accurate for the Morse potential \alert{for a range of
  parameters that explores the dependence of our results on $\delta.$}
\end{remark}

\section{Prediction of the Limit Strain for Fracture Instability}
\label{sec:discus}
The deformation $y_F \in \Ys_F$ is an equilibrium of the atomistic
energy for {\em all $F > 0$}. However, it is established in
Proposition~\ref{th:ana:stab_a} that $y_F$ is {\em stable} if and only
if ${F < F_{\rm a}^*}$ where $F_{\rm a}^*$ is the solution of the equation
\begin{equation}
  \label{eq:defn_Fa_crit}
  \psi_{\rm a}(F_{\rm a}^*) := \phi''(F_{\rm a}^*) + (4 - \eps^2 \mu_\eps^2) \phi''(2F_{\rm a}^*) = 0.
\end{equation}
We call $F_{\rm a}^*$ the critical strain for the atomistic model. The goal
of the present section is to use the stability analyses of the
different QC approximations in Sections \ref{sec:analysis} and \ref{sec:ana:qce}
to investigate how well the critical strains for the different QC
approximations approximate that of the atomistic model.

In order to test our predictions against numerical values, we will use
the Morse potential
\begin{equation}
  \label{eq:defn_morse}
  \phi_\alpha(r) = e^{-2 \alpha (r - 1)} - 2e^{-\alpha(r-1)}=(e^{-\alpha(r-1)}-1)^2-1,
\end{equation}
where $\alpha \geq 1$ is a fixed parameter, and the Lennard--Jones
potential
\begin{equation*}
\phi_{\rm lj}(r) = \frac{1}{r^{12}} - \frac{2}{r^6}.
\end{equation*}

\subsection{Limit strain for the QCL and QNL approximations}
The critical strain $F_{\rm c}^*$ for the local QC approximation as well as
the QNL approximation (cf. Propositions \ref{th:ana:qcl_stab} and
\ref{th:ana:qnl_stab}) is the solution to the equation
\begin{equation*}
  \psi_{\rm c}(F_{\rm c}^*) := \phi''(F_{\rm c}^*) + 4 \phi''(2F_{\rm c}^*) = 0.
\end{equation*}

We note that the critical strain $F_{\rm c}^*$ for the QCL and QNL models is
independent of $N$ which is convenient for the following
analysis. Inserting $F_{\rm c}^*$ into (\ref{eq:defn_Fa_crit}) gives
\begin{displaymath}
  \psi_{\rm a}(F_{\rm c}^*)
  = \psi_{\rm c}(F_{\rm c}^*) - \eps^2\mu_\eps^2 \phi''(2 F_{\rm c}^*)
  = - \eps^2\mu_\eps^2\phi''(2 F_{\rm c}^*),
\end{displaymath}
and hence
\begin{displaymath}
  \psi_{\rm a}(F_{\rm a}^*) - \psi_{\rm a}(F_{\rm c}^*) = \eps^2\mu_\eps^2\phi''(2F_{\rm c}^*).
\end{displaymath}
A linearization of the left-hand side gives
\begin{displaymath}
  \psi_{\rm a}'(F_{\rm c}^*) ( F_{\rm a}^* - F_{\rm c}^*)
  = \eps^2\mu_\eps^2 \phi''(2 F_{\rm c}^*)
  + O( |F_{\rm a}^* - F_{\rm c}^*|^2 ).
\end{displaymath}
Noting that $\psi_{\rm a}'(F_{\rm c}^*) = \psi_{\rm c}'(F_{\rm c}^*) + O(\eps^2)$, we find
that the {\em relative error} satisfies
\begin{equation}
  \label{eq:rel_err_ac}
  \begin{split}
    \left| \frac{F_{\rm a}^* - F_{\rm c}^*}{F_{0} - F_{\rm c}^*} \right|
    =~& \eps^2  \left| \frac{\pi^2 \phi''(2F_{\rm c}^*)}{
        (\phi'''(F_{\rm c}^*)+8\phi'''(2F_{\rm c}^*))(F_{0} - F_{\rm c}^*)} \right|
    + O(\eps^4) \\
    :=~& \eps^2 C_{\rm err}(\phi) + O(\eps^4),
  \end{split}
\end{equation}
where $F_{0}$ is the energy-minimizing macroscopic deformation gradient which satisfies
\[
\frac{d\E_{\rm a}(y_F)}{dF}(F_0) =\phi'(F_0) + 2 \phi'(2F_0) = 0.
\]

\begin{table}
  \begin{tabular}{r||cccccc|c}
    $\phi$ & $\phi_2$ & $\phi_3$ & $\phi_4$ & $\phi_5$ & $\phi_6$ &
    $\phi_7$ & $\phi_{\rm lj}$ \\
    \hline
    $C_{\rm err}(\phi)$ & 1.0877 & 0.3796 & 0.1339 & 0.0485 & 0.0177 &
    0.0065 & 0.0635
  \end{tabular}

  \vspace{2mm}

  \caption{\label{tbl:errconsts} Numerical values of the error
    constant $C_{\rm err}(\phi)$ defined in~\eqref{eq:rel_err_ac},
    for various choices of $\phi$.}

  \vspace{-10mm}
\end{table}

In Table \ref{tbl:errconsts} we display numerical values of $C_{\rm
  err}(\phi)$ for the Morse potential $\phi = \phi_\alpha$, with
$\alpha = 2, \dots, 7$, and for the Lennard--Jones potential $\phi =
\phi_{\rm lj}$. We observe that the constant decays exponentially as
the stiffness increases, and that it is moderate even for very
soft interaction potentials ($C_{\rm err}(\phi_2) \approx 1.0877$).

\subsection{Limit strain for the QCE approximation}
In Section \ref{sec:ana:qce}, we have computed a rough estimate for
the coercivity constant of the QCE approximation. We argued that, for as long
as the second neighbour interaction is small in comparison to the
nearest neighbour interaction, we have the bound
\begin{displaymath}
  \inf_{\substack{u \in \Us \\ \|u'\|_{\ell^2_\eps} = 1}}
  \E_{\rm qce}''(y_{{\rm qce}, F})[u,u] \leq \phi_F'' \Big\{
  1 + \frac{3}{2} \frac{\phi_{2F}''}{\phi_F''} + \frac{\phi_F'''\phi_{2F}'}{2 |\phi_{F}''|^2} + O(\delta^2) \Big\}.
\end{displaymath}
Even though this bound will, in all likelihood, become invalid near
the critical strain, it is nevertheless reasonable to expect that
solving
\begin{equation}
  \label{eq:discus:critF_qce}
  \tilde\psi_{\rm qce}(\tilde{F}_{\rm qce}^*) :=
  \phi_F'' + \smfrac{3}{2} \phi_{2F}'' + \frac{\phi_F'''\phi_{2F}'}{2 \phi_{F}''} = 0,
\end{equation}
will give a good approximation for the exact critical strain,
$F_{\rm qce}^*$. The latter is, loosely speaking, defined as the maximal
strain $F > 0$ for which a stable ``elastic'' equilibrium of
$\E_{\rm qce}$ exists in $\Ys_F$. A deformation $y$ can be called elastic
if $y_\ell'= O(1)$ for all $\ell$, as opposed to fractured if
$y_{\ell_0}' = O(N)$ for some $\ell_0$.

We could use the same argument as in the previous subsection to obtain a
representation of the error; however, since $\tilde F_{\rm qce}^*$ depends
only on $F$ but not on $\eps$ we can simply solve for $\tilde
F_{\rm qce}^*$ directly.

For the Morse potential \eqref{eq:defn_morse}, with stiffness
parameter $2 \leq \alpha \leq 7$, we have computed both $F_{\rm
  qce}^*$ (for $N = 40, K= 10$ as well as for $N = 100, K = 20$) and
$\tilde F_{\rm qce}^*$ numerically and have plotted these critical
strains in Figure \ref{fig:crit_strains_a}, comparing them against
$F_0$ and $F_{\rm c}^*$. We have also included the critical strain
$F_{\rm qce}^{y_F}$, below which $\E_{\rm qce}''(y_F)$ is positive
definite, to demonstrate that it bears no relation to the stability or
instability of the QCE approximation. We discuss $F_{\rm qce}^{y_F}$ in
detail in \cite{qcf.iterative} where we argue that it describes
the stability of the ghost-force correction scheme.

In Figure \ref{fig:crit_strains_b}, we plot the relative errors
\begin{equation*}
  \alpha \mapsto \left| \frac{F_{\rm qce}^*(\alpha) - F_{\rm c}^*(\alpha)}{
      F_{\rm c}^*(\alpha) - F_0(\alpha)} \right|
  \quad \text{and} \quad
  \alpha \mapsto \left| \frac{\tilde F_{\rm qce}^*(\alpha) - F_{\rm c}^*(\alpha)}{
      F_{\rm c}^*(\alpha) - F_0(\alpha)} \right|.
\end{equation*}

\begin{figure}
  \begin{center}
    \includegraphics[height=6.6cm]{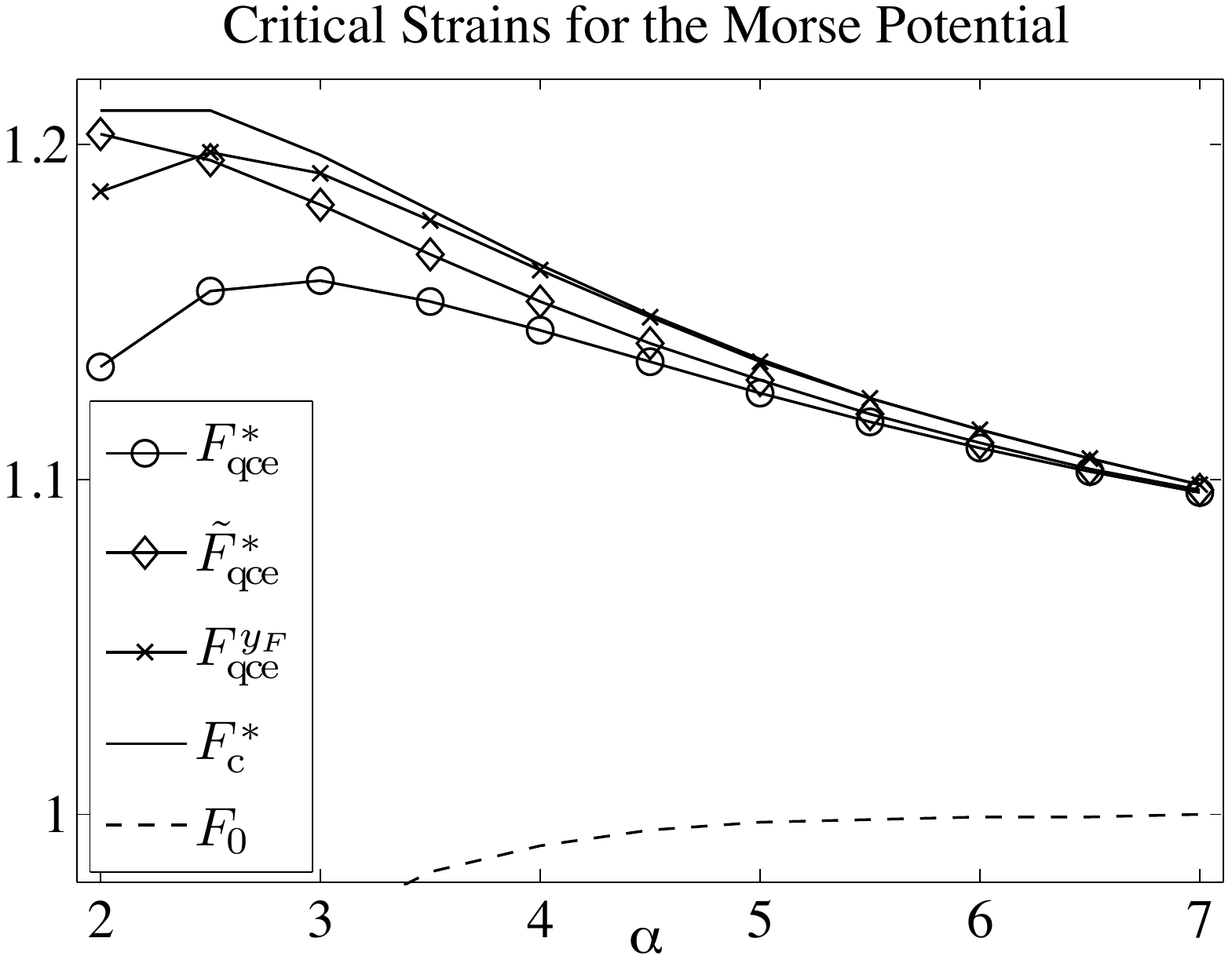}
    \caption{\label{fig:crit_strains_a} Critical strains $F_{\rm
        qce}^*,\ \tilde F_{\rm qce}^*,\ F_{\rm qce}^{y_F},\ F_{\rm c}^*$
      and the equilibrium strain $F_0$, computed for the Morse
      potential \eqref{eq:defn_morse} with varying $\alpha$.
      The critical strains for the QCE Hessian, $F_{\rm qce}^*,$
      are computed with $N = 40$ and
      $K = 10$.  The approximation, $\tilde F_{\rm qce}^*,$ is computed
      using the asymptotic approximation~\eqref{eq:discus:critF_qce}.
      The strain $F_{\rm qce}^{y_F}$ is the critical strain
      at which $\E_{\rm qce}''(y_F)$ is no longer positive definite.}
  \end{center}
\end{figure}

\begin{figure}
  \begin{center}
    \includegraphics[height=6.6cm]{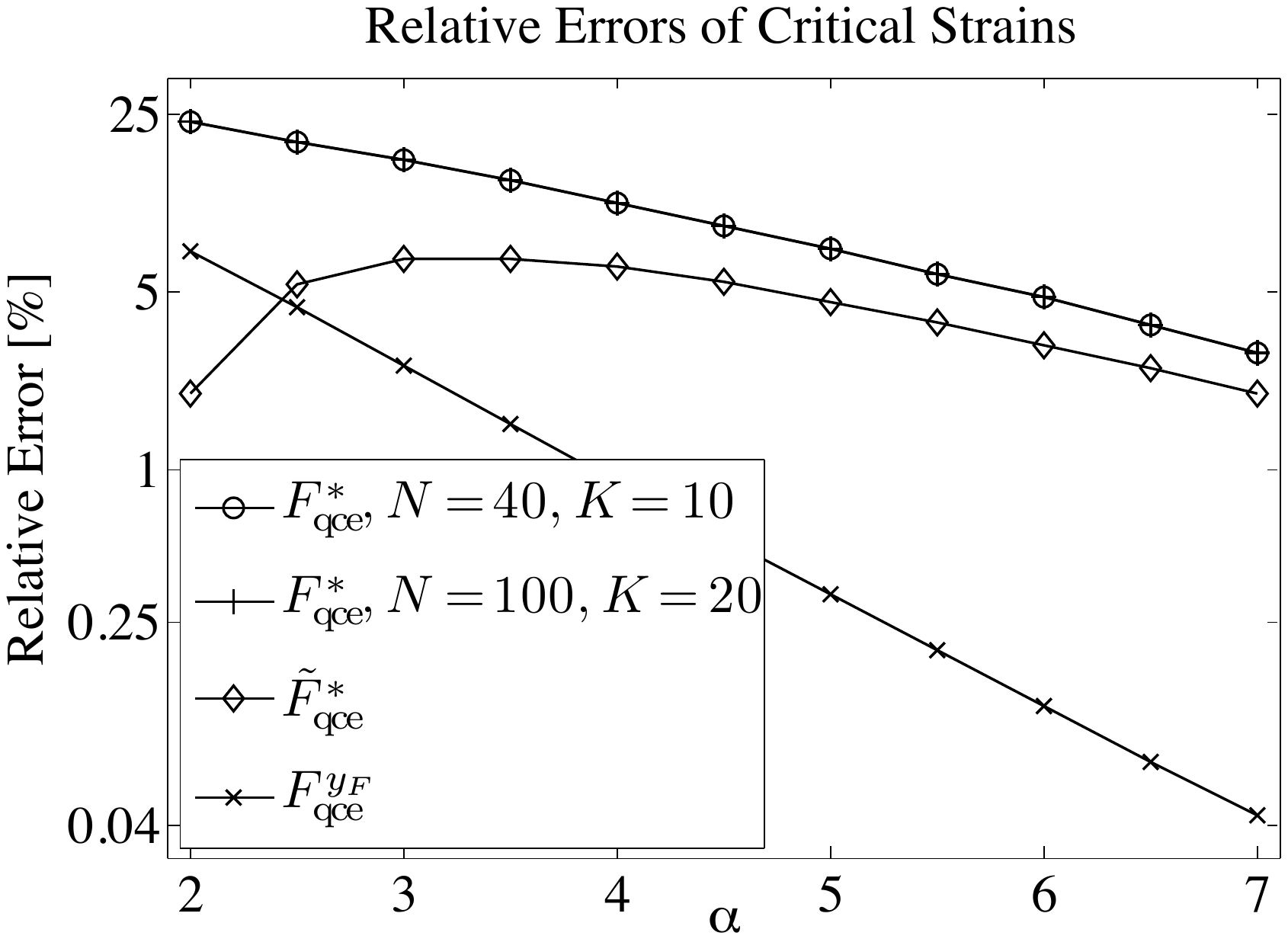}
    \caption{\label{fig:crit_strains_b} Relative errors of the
      critical strains (computed and predicted) for the QCE approximation
      against the critical strains of the QCL/QNL approximation. The errors
      are computed explicitly for $N = 40, K = 10$ as well as for $N =
      100, K = 20$, using the Morse potential \eqref{eq:defn_morse}
      with varying $\alpha$.  These two curves are very close and may
      be hard to distinguish.  Additionally, we show the critical
      strain for loss of positive definiteness of $\E_{\rm
        qce}''(y_F),$ which does not predict the loss of stability
      that the QCE experiences correctly for any parameter value.}
  \end{center}
\end{figure}

We observe that the prediction for the critical strain, as well as the
prediction for the relative error, obtained from our asymptotic
analysis is insufficient for very soft potentials but becomes fairly
accurate with increasing stiffness. In particular, it provides a good
prediction of the relative errors for the critical strains for $\alpha
\geq 3.5$.

For a correct interpretation of our results, we must first of all note
that the relative errors for the critical strains decay exponentially
with increasing stiffness $\alpha$. While, for small $\alpha$ (soft
potentials) the error is quite severe, one could argue that it is
insignificant (i.e., well below 10\%) for moderately large $\alpha$
(stiff potentials). However, our point of view is that, by a careful
choice of the atomistic region one should be able to control this
error, as is the case for consistent QC approximations such as QNL. For the
QCE approximation, this is impossible: the error in the critical strain is
{\em uncontrolled}.

\section*{Conclusion}

We propose sharp stability analysis as a theoretical criterion for
evaluating the predictive capability of atomistic-to-continuum
coupling methods.  Our results show that a sharp stability analysis is
as important as a sharp truncation error (consistency) analysis for
the evaluation of atomistic-to-continuum coupling methods, and
provides a new means to distinguish the relative merits of the various
methods.  Our results also provide an approach to establish a
theoretical basis for the conclusions of the benchmark numerical tests
reported in ~\cite{Miller:2008}, in particular for the poor
performance of the QCE approximation in predicting the movement of a
dipole of Lomer dislocations under applied shear.

\alert{
Of course, the simple one-dimensional situation that we have
considered here cannot nearly capture the complexity of
atomistic-to-continuum coupling methods in 2D/3D. Even the much
simpler question of whether QCL (the Cauchy-Born continuum model without
coupling) can correctly predict bifurcation points becomes much more
difficult since it is possible, in general, that the stability region
for the Cauchy--Born model is much larger than that for atomistic
model \cite{E:2007a}. However, in many interesting situations this
effect does not occur \cite{HudsOrt:a}, and it is an interesting
question to characterize these. Concerning the stability of the coupling
mechanism, no rigorous results are available in 2D/3D. Until such an
analysis is available, we propose that careful numerical experiments
should be performed, which experimentally investigate the stability
properties of atomistic-to-continuum coupling methods.
}


\appendix

\section{Representations of $\E_{\rm qce}'$ and $\E_{\rm qce}''$}
\label{sec:app_qce1}
Our aim in this section is to derive useful representations for the
first and second variations $\E_{\rm qce}'(y)$ and $\E_{\rm qce}''(y)$ of the
QCE energy functional. For notational convenience, we will only write
out terms in the right half of the domain $\{-N+1, \dots, N\}$,
indicating the remaining terms (which can be obtained from symmetry
considerations) by dots. For example, we write
\begin{align*}
  \E_{\rm qce}(y) =~& \dots + \eps \sum_{\ell = 0}^{N} \phi(y_\ell')
  + \eps \sum_{\ell = 0}^{K-1} \phi(y_\ell' + y_{\ell+1}')
  + \eps \sum_{\ell = K+2}^N \phi(2 y_\ell') \\
  & + \smfrac{\eps}{2} \phi(y_K' + y_{K+1}')
  + \smfrac{\eps}{2} \phi(y_{K+1}' + y_{K+2}')
  + \smfrac{\eps}{2} \phi(2y_{K+1}').
\end{align*}

The first variation is a linear form on $\Us$, given by
\begin{align*}
  \E_{\rm qce}'(y)[u] = \dots + \eps \sum_{\ell = 0}^N \phi'(y_\ell') u_\ell'
  + \eps \sum_{\ell = 0}^{K-1} \phi'(y_\ell'+y_{\ell+1}')(u_\ell' + u_{\ell+1}')& \\
  + \smfrac{\eps}{2} \phi'(y_K' + y_{K+1}')(u_K'+u_{K+1}')
  + \smfrac{\eps}{2} \phi'(y_{K+1}' + y_{K+2}')(u_{K+1}'+u_{K+2}') & \\
  + \smfrac{\eps}{2} \phi'(2y_{K+1}') (2 u_{K+1}')
  + \eps \sum_{\ell = K+2}^N \phi'(2y_\ell')(2u_\ell')&.
\end{align*}
Collecting terms related to element strains $u_\ell'$, we obtain
\begin{align}
  \label{eq:app:DEqce}
  \begin{split}
  \E_{\rm qce}'(y)[u] = \dots +
  \eps \sum_{\ell = 0}^{K-1} \big\{ \phi'(y_\ell')
  + \phi'(y_{\ell-1}'+y_\ell')+\phi'(y_\ell'+y_{\ell+1}')\big\}~& u_\ell' \\
  + \eps \big\{ \phi'(y_K') + \phi'(y_{K-1}'+y_K')
  + \smfrac12 \phi'(y_{K}'+y_{K+1}')\big\}~& u_K' \\
  + \eps \big\{ \phi'(y_{K+1}') +  \smfrac12 \phi'(y_{K}'+y_{K+1}')
  +\smfrac12 \phi'(y_{K+1}'+y_{K+2}') + \phi'(2y_{K+1}') \big\} ~& u_{K+1}'\\
  + \eps \big\{ \phi'(y_{K+2}' + \smfrac12 \phi'(y_{K+1}'+y_{K+2}')
  + 2 \phi'(2y_{K+2}') \big\} ~& u_{K+2}' \\
  + \eps \sum_{\ell = K+3}^N \big\{ \phi'(y_\ell')
  + 2 \phi'(2y_\ell') \big\} ~& u_{\ell}'.
  \end{split}
\end{align}

Similarly, the second variation can be written in the form
\begin{equation}
  \label{eq:app:qce_hess_1}
  \begin{split}
    \E_{\rm qce}''(y)[u,u] = \dots
    + \eps \sum_{\ell = 0}^N \phi''(y_\ell') |u_\ell'|^2
    + \eps \sum_{\ell = 0}^{K-1} \phi''(y_\ell'+y_{\ell+1}')
    |u_\ell' + u_{\ell+1}'|^2& \\
    + \smfrac{\eps}{2} \phi''(y_K'+y_{K+1}')|u_K'+u_{K+1}'|^2
    + \smfrac{\eps}{2} \phi''(y_{K+1}'+y_{K+2}') |u_{K+1}'+u_{K+2}'|^2 & \\
    + \smfrac{\eps}{2} \phi''(2 y_{K+1}') |2u_{K+1}'|^2
    + \eps\sum_{\ell = K+2}^N \phi''(2y_\ell') |2u_\ell'|^2&.
  \end{split}
\end{equation}
Using (\ref{eq:rewrite_nnn}) to replace all second-neighbour terms in
(\ref{eq:app:qce_hess_1}), we obtain the alternative representation
\begin{align}
  \label{eq:app:qce_hess_2}
  \E_{\rm qce}''(y)[u,u] = \dots
  + \eps \sum_{\ell = 0}^{K-1} \big[
  \phi''(y_\ell') + 2 \phi''(y_{\ell-1}'+y_\ell')
  + 2\phi''(y_{\ell}'+y_{\ell+1}') \big]& |u_\ell'|^2 \\
  \notag
  + \eps \big[ \phi''(y_K') + 2 \phi''(y_{K-1}'+y_K')
  + \phi''(y_K'+y_{K+1}') \big]& |u_K'|^2 \\
  \notag
  + \eps \big[ \phi''(y_{K+1}') + \phi''(y_{K}'+y_{K+1}')  +
  \phi''(y_{K+1}'+y_{K+2}')  + 2\phi''(2y_{K+1}') \big]& |u_{K+1}'|^2 \\
  \notag
   + \eps \big[ \phi''(y_{K+2}') + \phi''(y_{K+1}'+y_{K+2}')
  + 4 \phi''(2y_{K+2}') \big]& |u_{K+2}'|^2 \\[-1mm]
  \notag
  + \eps \sum_{\ell = K+3}^{N} \big[ \phi''(y_\ell')
  + 4 \phi''(2 y_\ell')\big]& |u_\ell'|^2 \\[-4mm]
  \notag
  - \eps^3 \sum_{\ell = 0}^{K-1} \phi''(y_\ell'+u_{\ell+1}') |u_\ell''|^2
  - \smfrac12 \eps^3 \big\{ \phi''(y_K'+y_{K+1}') |u_K''|^2
  + \phi''(y_{K+1}'+y_{K+2}') |u_{K+1}''|^2 \big\}. \hspace{-2cm} &
\end{align}
While somewhat unwieldy at first glance, this representation is
particularly useful for the stability analysis in Section
\ref{sec:ana:qce}.

\section{Proof of Lemma \ref{th:asymptotic_expansion}}
\label{sec:app_qce_eq}
In this section, we complete the proof of Lemma
\ref{th:asymptotic_expansion} which was merely hinted at in the main
text of Section \ref{sec:ana:qce}. Recall that $\hat{y}_{{\rm qce}, F} =
y_F + \delta_1 \hat{g}$ where $\hat{g}$ is given by
\eqref{eq:qce:defn_uhat}, and recall, moreover, that $\hat{y}_{\rm
  qce}$ solves the linear system
\begin{equation}
  \label{eq:app_qce:defnyhat}
  \phi_F'' \< (\hat y_{{\rm qce}, F} - y_F)', u' \> = \phi_{2F}' \< \hat g', u' \>
  = - \E_{\rm qce}'(y_F)[u] \qquad \text{ for all } u \in \Us.
\end{equation}
Our strategy is to prove that $\hat{y}_{{\rm qce}, F}$ has a residual of order
O($\delta_1^2 + \delta_1\delta_2$) and that $\E_{\rm qce}''(\hat y_{{\rm qce}, F})$
is an isomorphism between suitable function spaces. We will then apply
a quantitative inverse function theorem to prove the existence of a
solution $y_{{\rm qce}, F}$ of the QCE criticality condition
\eqref{eq:qce:nonlin} which is ``close'' to $\hat y_{{\rm qce}, F}$. Before we
embark on this analysis, we make several comments and introduce some
notation that will be helpful later on.

To ensure that $\E_{\rm qce}$ is sufficiently differentiable in a
neighbourhood of $\hat y_{{\rm qce}, F}$ we only need to assume that $F > 0$
and that $\delta_1$ is sufficiently small, e.g., $\delta_1 \leq F$. In
that case, $\E_{\rm qce}$ is three times differentiable at $y$ for any $y
\in \Ys_F$ such that $\|y' - \hat y_{{\rm qce}, F}' \|_{\ell^\infty} <
\smfrac12 \delta_1$.

We will interpret $\E_{\rm qce}'$ as a nonlinear operator from
$\Us^{1,\infty}$ to $\Us^{-1,\infty}$ which are, respectively, the
spaces $\Us$ and $\Us^*$ endowed with the Sobolev-type norms,
\begin{displaymath}
  \| u \|_{\Us^{1,\infty}} = \|u'\|_{\ell^\infty} \quad \text{for } u \in \Us,
  \quad \text{and}  \quad
  \| T \|_{\Us^{-1,\infty}} =  \sup_{\substack{v \in \Us \\\|v'\|_{\ell^1_\eps} = 1}} T[v]
  \quad \text{for } T \in \Us^*.
\end{displaymath}
Consequently, for $y \in \Ys_F$, $\E_{\rm qce}''(y)$ can be understood as
a linear operator from $\Us^{1,\infty}$ to $\Us^{-1,\infty}$.

Our justification for defining $\hat{y}_{{\rm qce}, F}$ as we did in
\eqref{eq:app_qce:defnyhat} is the bound
\begin{equation}
  \label{eq:D2EyF_expansion}
  \big| \E_{\rm qce}''(y_F)[u,v] - \phi_F'' \<u', v'\> \big|
  \leq \phi_F'' c_1 \delta_2 \|u'\|_{\ell^\infty_\eps} \|v'\|_{\ell^1_\eps}
  \quad \text{ for all } u, v \in \Us,
\end{equation}
where $c_1 = 5$, which follows from
\eqref{eq:Eqce_decomposition_final}. We can formulate this bound
equivalently as
\begin{equation}
  \label{eq:D2EyF_expansion_2}
  \| \E_{\rm qce}''(y_F) - \phi_F'' L_1 \|_{L(\Us^{1,\infty},\ \Us^{-1,\infty})}
  \leq \phi_F'' c_1 \delta_2,
\end{equation}
where $L_1 : \Us \to \Us^*$ is given by
\begin{displaymath}
  L_1 (u)[ v ] = \< u', v' \> \qquad \text{ for all } u, v \in \Us.
\end{displaymath}
We also remark that $L_1 : \Us^{1,\infty} \to \Us^{-1,\infty}$ is an
isomorphism, uniformly bounded in $N$, more precisely,
\begin{equation}
  \label{eq:qceapp:L1_iso}
  \| L_1^{-1} \|_{L(\Us^{-1,\infty},\ \Us^{1,\infty})} \leq 2.
\end{equation}
This result follows, for example, as a special case of
\cite[Eq. (36)]{Ortner:2008a} or \cite[Eq. (5.2)]{dobs-qcf2}, and is
also contained in \cite{doblusort:qcf.stab}.

We are now ready to estimate the residual of $\hat{y}_{{\rm qce}, F}$.
Expanding $\E_{\rm qce}'(\hat y_{{\rm qce}, F})$ to first order gives
\begin{equation}
  \label{eq:DEqce(haty)}
  \begin{split}
    \E_{\rm qce}'(\hat y_{{\rm qce}, F})[v] =~& \big\{ \E_{\rm qce}'(y_F)[v]
    + \delta_1 \E_{\rm qce}''(y_F)[\hat g, v] \big\} \\
    & + \delta_1 \int_0^1 \big\{
    \E_{\rm qce}''(y_F + t \delta_1 \hat g)[\hat g, v]
    - \E_{\rm qce}''(y_F)[\hat g, v] \big\} \dt.
  \end{split}
\end{equation}
We will estimate the two groups on the right-hand side of
\eqref{eq:DEqce(haty)} separately. Using \eqref{eq:qce:ghost_force}
and \eqref{eq:D2EyF_expansion}, we obtain
\begin{equation}
  \label{app:qce:resest_1}
  \begin{split}
    \big|\E_{\rm qce}'(y_F)[v] + \delta_1 \E_{\rm qce}''(y_F)[\hat g, v]\big|
    =~& \delta_1 \big| - \phi_F'' \< \hat g', v'\> + \E_{\rm qce}''(y_F)[\hat g, v]
    \big| \\
    \leq~& \phi_F'' c_1 \delta_1  \delta_2 \| \hat g'\|_{\ell^\infty_\eps}
    \|v'\|_{\ell^1_\eps} \qquad \text{ for all } v \in \Us.
  \end{split}
\end{equation}

To estimate the second group in \eqref{eq:DEqce(haty)} we simply use
the regularity of the interaction potential (we assumed that $\phi \in
\CC^3(0, +\infty)$) and H\"{o}lder's inequality to obtain
\begin{equation}
  \label{app:qce:resest_2}
  \big| \E_{\rm qce}''(y_F + t \delta_1 \hat g)[\hat g, v]
  - \E_{\rm qce}''(y_F)[\hat g, v] \big| \leq \phi_F'' c_2 t \delta_1
  \| \hat g'\|_{\ell^\infty}^2 \|v'\|_{\ell^1_\eps},
\end{equation}
where $(\phi_F'' c_2)$ is a local Lipschitz constant for $\phi''$,
that is, there exists a universal constant $\hat{c}_2$ such that
\begin{displaymath}
  c_2 = \hat c_2 \sup_{|r| \leq \smfrac12\delta_1} \frac{\max(|\phi'''(F+r)|,
    |\phi'''(2(F+r))|)}{\phi_F''}.
\end{displaymath}
In particular, if $\delta_1$ is sufficiently small then we may assume
that
\begin{displaymath}
  c_2 = 2 \hat{c}_2 \frac{\max( |\phi_F'''|, |\phi_{2F}'''|)}{\phi_F''}.
\end{displaymath}

Inserting \eqref{app:qce:resest_2} and \eqref{app:qce:resest_1} into
(\ref{eq:DEqce(haty)}), and using the fact that $\|\hat
g'\|_{\ell^\infty} = \smfrac12$, we obtain the
$\Us^{-1,\infty}$-residual estimate
\begin{equation*}
  \| \E_{\rm qce}'(\hat y_{{\rm qce}, F}) \|_{\Us^{-1,\infty}}
  \leq \phi_F''(\smfrac12 c_1 \delta_1 \delta_2 + \smfrac18 c_2 \delta_1^2).
\end{equation*}

Next, we estimate $\| \E_{\rm qce}''(\hat y_{{\rm qce}, F})^{-1} \|_{L(\Us^{-1,\infty},\
  \Us^{1,\infty})}$.  Using \eqref{eq:D2EyF_expansion_2} and a similar
argument as for \eqref{app:qce:resest_2} gives
\begin{align*}
  \| \E_{\rm qce}''(\hat y_{{\rm qce}, F}) - \phi_F'' L_1 \|_{L(\Us^{1,\infty},\ \Us^{-1\infty})}
  \leq~&
  \| \E_{\rm qce}''(\hat y_{{\rm qce}, F}) - \E_{\rm qce}''(y_F) \|_{L(\Us^{1,\infty},\ \Us^{-1\infty})} \\
  & + \| \E_{\rm qce}''(y_F) - \phi_F'' L_1 \|_{L(\Us^{1,\infty},\ \Us^{-1\infty})} \\
  \leq~& \phi_F'' (\smfrac12 c_2 \delta_1 + c_1 \delta_2).
\end{align*}
Moreover, from \eqref{eq:qceapp:L1_iso}, we deduce that
\begin{displaymath}
  \| (\phi_F'' L_1)^{-1} \|_{L(\Us^{-1,\infty},\ \Us^{1,\infty})} \leq \frac{2}{\phi_F''}.
\end{displaymath}

A standard result of operator theory states that if $X, Y$ are Banach
spaces and $T, S : X \to Y$ are bounded linear operators with $T$
being invertible and satisfying $\|S - T\| < 1 / \|T^{-1}\|,$ then $S$
is invertible and
\begin{displaymath}
  \|S^{-1} \| \leq \frac{ \|T^{-1}\|}{1 - \|T^{-1}\| \|S-T\|}.
\end{displaymath}
In our case, setting $T = \phi_F'' L_1$ and $S = \E_{\rm qce}''(\hat
y_{{\rm qce}, F})$, this translates to
\begin{displaymath}
  \| \E_{\rm qce}''(\hat y_{{\rm qce}, F})^{-1} \|_{L(\Us^{-1,\infty},\ \Us^{1,\infty})}
  \leq \frac{2}{\phi_F''(1 - \smfrac12 c_2 \delta_1 - c_1 \delta_2)},
\end{displaymath}
provided that the denominator is positive. Thus, for $\delta_1,
\delta_2$ sufficiently small, we obtain the bound
\begin{equation*}
  \| \E_{\rm qce}''(\hat y_{{\rm qce}, F})^{-1} \|_{L(\Us^{-1,\infty},\ \Us^{1,\infty})}
  \leq \frac{4}{\phi_F''}.
\end{equation*}

We now apply the following version of the inverse function theorem.

\begin{lemma}
  \label{th:inverse_fcn_thm}
  Let $X, Y$ be Banach spaces, $U$ an open subset of $X$, and let $F :
  U \to Y$ be Fr\'{e}chet differentiable. Suppose that $x_0 \in U$
  satisfies the conditions
  \begin{align*}
    &  \| F(x_0) \|_{Y} \leq \eta, \quad
    \| F'(x_0)^{-1} \|_{L(Y,X)} \leq \sigma^{-1}, \\
    & \overline{B_X(x_0, 2\eta\sigma^{-1})} \subset U, \\
    & \| F'(x_1) - F'(x_2) \|_{L(X,Y)} \leq L \| x_1 - x_2 \|_X \quad
    \text{for} \quad \|x_j - x_0\|_X \leq 2 \eta \sigma^{-1}, \\
    & \text{ and } 2 L \sigma^{-2} \eta < 1,
  \end{align*}
  then there exists $x \in X$ such that $F(x) = 0$ and $\|x - x_0\|_X
  \leq 2\eta\sigma^{-1}$.
\end{lemma}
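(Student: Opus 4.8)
The plan is to recast $F(x)=0$ as a fixed-point problem for the simplified-Newton (chord) map and to apply the Banach contraction principle on a carefully chosen closed ball. I would first abbreviate $\beta := \sigma^{-1}$ and $\tilde\eta := \beta\eta$, so that the hypothesis $2L\sigma^{-2}\eta<1$ reads $q_0 := 2\beta L\tilde\eta<1$, and introduce $\Phi : U \to X$, $\Phi(x) := x - F'(x_0)^{-1}F(x)$. Since $F'(x_0)$ is invertible, $x$ is a zero of $F$ if and only if it is a fixed point of $\Phi$, so it suffices to produce a fixed point of $\Phi$ inside $\overline{B_X(x_0,2\eta\sigma^{-1})}$.

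The argument rests on two estimates obtained by writing increments of $F$ as line integrals of $F'$ along segments contained in $\overline{B_X(x_0,2\eta\sigma^{-1})}$; this is legitimate because the Lipschitz hypothesis renders $F'$ continuous on that ball. First, for $x$ in the ball,
$$\Phi(x) - \Phi(x_0) = F'(x_0)^{-1}\!\int_0^1\!\big[F'(x_0) - F'(x_0 + t(x-x_0))\big](x-x_0)\dt,$$
hence, since $\|F'(x_0) - F'(x_0+t(x-x_0))\| \le Lt\|x-x_0\|$, $\int_0^1 t\dt = \smfrac12$, and $\|\Phi(x_0)-x_0\| = \|F'(x_0)^{-1}F(x_0)\| \le \beta\eta = \tilde\eta$,
$$\|\Phi(x) - x_0\| \;\le\; \smfrac12\beta L\|x-x_0\|^2 + \tilde\eta.$$
It is \emph{essential} here to keep the factor $\smfrac12$ coming from the integral remainder; the cruder bound $\beta L\|x-x_0\|^2$ would force the stronger hypothesis $4L\sigma^{-2}\eta<1$. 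Second, for $x_1,x_2$ in a ball $\overline{B_X(x_0,\rho)}$ with $\rho\le 2\eta\sigma^{-1}$, convexity of the ball together with the Lipschitz bound gives $\|\Phi(x_1)-\Phi(x_2)\| \le \beta L\rho\,\|x_1-x_2\|$.

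The key choice is the radius $\rho = r_- := \frac{2\tilde\eta}{1+\sqrt{1-q_0}}$, the smaller root of $\smfrac12\beta L r^2 - r + \tilde\eta = 0$ (real and positive since $0<q_0<1$). On the one hand, rationalizing shows $r_-\le 2\tilde\eta = 2\eta\sigma^{-1}$, so $\overline{B_X(x_0,r_-)}\subset\overline{B_X(x_0,2\eta\sigma^{-1})}\subset U$ and both estimates above apply on it; on the other hand $\beta L r_- = \frac{q_0}{1+\sqrt{1-q_0}}<1$. The defining identity $\smfrac12\beta L r_-^2 + \tilde\eta = r_-$ combined with the first estimate shows $\Phi$ maps $\overline{B_X(x_0,r_-)}$ into itself, and the second estimate shows it is a contraction there with constant $\beta L r_-<1$. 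Since $\overline{B_X(x_0,r_-)}$ is a complete metric space, the Banach fixed-point theorem yields a (unique) $x\in\overline{B_X(x_0,r_-)}$ with $\Phi(x)=x$, hence $F(x)=0$ and $\|x-x_0\|\le r_-\le 2\eta\sigma^{-1}$, as claimed (the case $\eta=0$ being trivial).

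The only genuinely non-routine point is this last radius bookkeeping: one must check simultaneously that the natural Kantorovich ball $\overline{B_X(x_0,r_-)}$ lies inside the prescribed ball on which the Lipschitz bound is assumed, and that the contraction constant on it is strictly below $1$ — both of which fall out once $r_-$ is written in the form $2\tilde\eta/(1+\sqrt{1-q_0})$. Everything else — the mean-value/integral identities, the operator-norm estimates via Hölder's inequality, and the invocation of the contraction principle — is standard; an essentially equivalent alternative would run the chord iteration explicitly and sum the resulting geometric series.
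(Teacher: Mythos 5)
Your proof is correct. The estimates are the standard Newton--Kantorovich ones for the chord map $\Phi(x)=x-F'(x_0)^{-1}F(x)$: the bound $\|\Phi(x)-x_0\|\le\smfrac12\beta L\|x-x_0\|^2+\tilde\eta$ (with the crucial factor $\smfrac12$ from the integral remainder), the contraction bound $\beta L\rho$ on $\overline{B_X(x_0,\rho)}$, and the choice of the smaller Kantorovich radius $r_-=2\tilde\eta/(1+\sqrt{1-q_0})\le 2\eta\sigma^{-1}$ all check out, and the hypothesis $2L\sigma^{-2}\eta<1$ is exactly what makes $r_-$ real and the contraction constant $q_0/(1+\sqrt{1-q_0})$ strictly less than $1$. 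The route is genuinely different from the paper's: there the lemma is proved by invoking Theorem 2.1 of the a posteriori existence framework in \cite{ortner_apostex} with the choices $R=2\eta\sigma^{-1}$, $\omega(x_0,R)=LR$, $\bar\omega(x_0,R)=\smfrac12 LR^2$, i.e., the quantitative bookkeeping is outsourced to a cited result, while you carry it out from scratch via the Banach fixed-point theorem. What your version buys is self-containedness, the slightly sharper localization $\|x-x_0\|\le r_-$, and uniqueness of the zero in $\overline{B_X(x_0,r_-)}$ (which the lemma does not claim but which comes for free); what the paper's version buys is brevity and compatibility with more general continuity moduli than Lipschitz. Two cosmetic points only: the closing reference to ``H\"older's inequality'' is a misnomer --- you only use submultiplicativity of the operator norm --- and it is worth stating explicitly that the segment $x_0+t(x-x_0)$, $t\in[0,1]$, stays in $\overline{B_X(x_0,2\eta\sigma^{-1})}$ by convexity so that the Lipschitz hypothesis and the line-integral identity apply; neither affects the validity of the argument.
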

\begin{proof}
  The result follows, for example, by applying Theorem 2.1 in
  \cite{ortner_apostex} with the choices $R = 2 \eta\sigma^{-1}$,
  $\omega(x_0, R) = LR$ and $\bar\omega(x_0, R) = \frac12 L
  R^2$. Similar results can be obtained by tracking the constants in
  most proofs of the inverse function theorem, and assuming local
  Lipschitz continuity of $F'$.
\end{proof}

\medskip \noindent For our purposes, we set $X = \Us^{1,\infty}$, $Y =
\Us^{-1,\infty}$, $F(u) = \E_{\rm qce}'(\hat y_{{\rm qce}, F} + u)$, and $x_0 =
0$. Assuming that $\delta_1, \delta_2$ are sufficiently small, our
previous analysis gives the residual and stability estimates
\begin{displaymath}
  \eta = \phi_F'' (\smfrac12 c_1\delta_1\delta_2 + \smfrac18 c_2 \delta_1^2)
  \quad \text{and} \quad \sigma = \smfrac14 \phi_F'',
\end{displaymath}
and, in particular,
\begin{displaymath}
  2\eta\sigma^{-1} = 4 c_1 \delta_1\delta_2 +  c_2 \delta_1^2.
\end{displaymath}

To ensure that $\overline{B_{\Us^{1,\infty}}(0, 2\eta\sigma^{-1})}$ remains
within the region of differentiability of $F$, that is, to ensure that
$(\hat y_{{\rm qce}, F} + u)_\ell' > 0$ for $\|u'\|_{\ell^\infty} \leq
2\eta\sigma^{-1}$, it is clearly enough to assume that $\delta_1$ and
$\delta_2$ are sufficiently small.

A modification of \eqref{app:qce:resest_2} then allows the choice $L =
2 \phi_F'' c_2$ for the local Lipschitz constant.

Thus, the condition ensuring the existence of a solution $y_{{\rm qce}, F}$ of
\eqref{eq:qce:nonlin} becomes
\begin{displaymath}
  4 L \sigma^{-2} \eta
  = 64 c_1c_2 \delta_1\delta_2 + 16 c_2^2 \delta_1^2
  < 1,
\end{displaymath}
which is satisfied, once again, if we assume that $\delta_1$ and
$\delta_2$ are sufficiently small. An application of Lemma
\ref{th:inverse_fcn_thm} concludes the proof of Lemma
\ref{th:asymptotic_expansion}.

\end{document}